\theoremstyle{plain}
\newtheorem{Lem}{Lemma}[section]
\newtheorem{Cor}[Lem]{Corollary}
\newtheorem{Thm}[Lem]{Theorem}
\theoremstyle{definition} 
\newtheorem{Rk}[Lem]{Remark}
\newtheorem{Def}[Lem]{Definition}}
\newcommand{\zig}{\addtocounter{Lem}{1}\tag{\theLem}}
\newcommand{\G}{\mathbb{Z}_p^\times}
\newcommand{\cyclic}{C_{p-1}}
\newcommand{\padics}{\mathbb{Z}_p}
\newcommand{\zee}{\mathbb{Z}}
\def\:{\colon}
\DeclareMathOperator*{\colim}{colim}
\DeclareMathOperator*{\holim}{holim}
\begin{document}
\title{A computational reduction for many base cases in profinite telescopic algebraic $K$-theory}
\author{Daniel G. Davis}
\begin{abstract} 
For primes $p\geq 5 $, $K(KU_p)$---the algebraic $K$-theory spectrum of $(KU)^{\wedge}_p$, Morava $K$-theory $K(1)$, and Smith-Toda complex 
$V(1)$, Ausoni and Rognes conjectured (alongside related conjectures) that 
$L_{K(1)}S^0 \mspace{-1.5mu}\xrightarrow{\mspace{-2mu}\text{unit} \, i}~\mspace{-7mu}(KU)^{\wedge}_p$ induces a map 
$K(L_{K(1)}S^0) \wedge v_2^{-1}V(1) \to K(KU_p)^{h\G} \wedge v_2^{-1}V(1)$ that is an equivalence. Since the definition of this map is not well understood, we consider $K(L_{K(1)}S^0) \wedge v_2^{-1}V(1) \to (K(KU_p) \wedge v_2^{-1}V(1))^{h\G}$, which is induced by $i$ and also should be an equivalence. We show that for any closed $G < \G$, $\pi_\ast((K(KU_p) \wedge v_2^{-1}V(1))^{hG})$ is a direct sum of 
two pieces given by (co)invariants and a coinduced module, for 
$K(KU_p)_\ast(V(1))[v_2^{-1}]$. When $G = \G$, the direct sum is, conjecturally, 
$K(L_{K(1)}S^0)_\ast(V(1))[v_2^{-1}]$ and, by using $K(L_p)_\ast(V(1))[v_2^{-1}]$, 
where $L_p = ((KU)^{\wedge}_p)^{h\zee/((p-1)\zee)}$, 
the summands simplify. The Ausoni-Rognes conjecture suggests that 
in 
\[(-)^{h\G} \wedge v_2^{-1}V(1) \simeq (K(KU_p) \wedge v_2^{-1}V(1))^{h\G},\] 
$K(KU_p)$ fills in the blank; we show that for 
any $G$, the blank can be filled by $(K(KU_p))^\mathrm{dis}_\mathcal{O}$, a 
discrete $\G$-spectrum built out of $K(KU_p)$.
\end{abstract}

\maketitle
\section{Introduction} 
\subsection{A brief overview of the circle of ideas in which this work occurs}
The algebraic $K$-theory spectrum $K(S^0)$ of the sphere spectrum $S^0$ plays an important role in geometric topology. For example, there is an equivalence \[K(S^0) \simeq S^0 \vee 
\mathrm{Wh}^\mathrm{Diff}(\ast),\] where $\mathrm{Wh}^\mathrm{Diff}(\ast)$ is the smooth 
Whitehead spectrum of a point, so that $K(S^0)$ is closely related, for large $n$, to 
$\mathrm{Diff}(D^n)$, the group of self-diffeomorphisms 
of the $n$-disc $D^n$ that fix its boundary. Also, if we let $\mathcal{W}$ denote the infinite loop space of $\mathrm{Wh}^\mathrm{Diff}(\ast)$, then the loop space of $\mathcal{W}$ is homotopy equivalent to 
the stable $h$-cobordism space of $\ast$. For these results and other examples of the role of 
$K(S^0)$ in geometric topology, see \cite[Sections 2, 3]{rognesicm}, \cite{WJR}, and 
\cite[Section 0]{2primary}. 

In \cite{Waldhausen}, Waldhausen initiated an effort to understand $K(S^0)$ by using chromatic homotopy theory, in which one tries to understand $S^0$, by understanding 
the $K(n)$-local sphere $L_{K(n)}(S^0)$ (and other ``height $n$" objects too) 
for every natural number $n$ and each prime $p$ (see, for example, \cite{HoveyCech}, 
\cite{GHMR}, \cite[Section 1.2]{mathbbC}, and \cite[page 1, Proposition 3.2, Example 3.3]{fracturecubes}). 
Here, $K(n)$ is the $n$th Morava $K$-theory spectrum and it varies with $p$. 
As explained in \cite[pages 3--7]{acta} (also, see \cite[Section 1]{GabeTHH} for an account with 
a focus on the role of $n = 1$), it follows that studying the algebraic $K$-theory spectra $K(L_{K(n)}(S^0))$ is fruitful for 
the study of $K(S^0)$. Though this method of study is promising, it is also very 
challenging, and one of the 
main strategies for making progress with it is a conjecture by Ausoni and Rognes (see \cite[paragraph containing (0.1)]{acta}) that involves 
the extended Morava stabilizer group $G_n$. To avoid too much detail in this opening subsection, we omit the statement of the conjecture here; we discuss in detail 
what we need from 
it below. 

The base case of this conjecture, when $n = 1$, is currently unsolved. For this base case, when $p \geq 5$, we 
make progress in this paper on a computation that gives a conjectural description of certain 
stable homotopy 
groups that appear in the conjecture (for a particular choice of ``telescope" (see below)) and are closely related to the stable 
homotopy groups of $K(L_{K(1)}(S^0))$. 
\subsection{A closer look at the motivation for our work}  

Let $p$ be any prime, with $\mathbb{Z}_p$ the $p$-adic integers, and let 
$L_{K(1)}(S^0)$ be the Bousfield localization of the sphere spectrum with respect to 
the first Morava $K$-theory spectrum. Also, 
let $KU_p$ be $p$-complete complex $K$-theory, so that 
\[\pi_\ast(KU_p) = \mathbb{Z}_p[u^{\pm 1}],\] where $\pi_0(KU_p) = \mathbb{Z}_p$ 
and $|u| = 2$, and let $\mathbb{Z}_p^\times$ denote the group of units in 
$\mathbb{Z}_p$. By \cite{Pgg/Hop0, AndreQuillen}, $\mathbb{Z}_p^\times$ -- as the group of $p$-adic Adams operations -- acts on the commutative $S^0$-algebra 
$KU_p$ by maps of 
commutative $S^0$-algebras. Given a commutative $S^0$-algebra $A$, 
the algebraic $K$-theory spectrum of $A$, $K(A)$, 
is a commutative $S^0$-algebra, so that $K(KU_p)$ is a commutative 
$S^0$-algebra, and by 
the functoriality of $K(-)$, $\mathbb{Z}_p^\times$ acts on 
$K(KU_p)$ by maps of commutative $S^0$-algebras.

For the rest of this paper, we let $p \geq 5$. Let 
$V(1)$ be the type $2$ Smith-Toda 
complex $S^0/(p, v_1)$. Then there is a $v_2$-self-map 
$v \: \Sigma^d V(1) \to V(1)$, where $d$ is 
some positive integer (see \cite[Theorem 9]{nilpotencetwo}), and hence, 
$v$ induces a sequence 
\[V(1) \to \Sigma^{-d}V(1) \to \Sigma^{-2d} V(1) \to \cdots\] of maps of 
spectra, and we set
\[v_{2}^{-1}V(1) = \colim_{j \geq 0} \Sigma^{-jd}V(1),\] the 
mapping telescope associated to $v$. 
In \cite[paragraph containing (0.1)]{acta}, \cite[Conjecture 4.2]{rognesguido}, 
and \cite[page 46; Remark 10.8]{jems},
Christian Ausoni and John Rognes conjectured that the $K(1)$-local unit map 
\[i \: L_{K(1)}(S^0) \to KU_p\] induces a 
weak equivalence 
\begin{equation}\label{themap}\zig
K(L_{K(1)}(S^0)) \wedge v_2^{-1}V(1) \to K(KU_p)^{h\mathbb{Z}_p^\times} 
\wedge v_2^{-1}V(1),\end{equation} where \[K(KU_p)^{h\mathbb{Z}_p^\times} = (K(KU_p))^{h\mathbb{Z}_p^\times}\] is a 
continuous homotopy fixed point spectrum that is formed with respect to a continuous action of the profinite group $\mathbb{Z}_p^\times$ on 
$K(KU_p)$. 

\begin{Rk}
The above conjecture is a 
collection of $n=1$ instances of a more general conjecture made by Ausoni and Rognes for every positive integer and every prime (for 
more information, see the references mentioned above). 
\end{Rk}

One difficulty with making progress on this conjecture is that there is no published construction of $K(KU_p)^{h\mathbb{Z}_p^\times}$ 
and, according to \cite[Remark 1.5]{padicspectra2}, the only models for it, 
currently, are a ``candidate definition" that uses condensed 
spectra (in the sense of Clausen-Scholze) in the setting of $\infty$-categories 
(the author learned of this construction from Jacob Lurie) and, possibly, a 
pyknotic version of this construction (in the framework of \cite{pyknotic}). 
Thus, due to the lack of a robust model for the map in (\ref{themap}), the 
conjecture is difficult to approach computationally. 

If $G$ is any profinite group and $X$ is a discrete $G$-spectrum (as in 
\cite{joint}; the crux of this concept is that for every $k$, $l \geq 0$, the 
set of $l$-simplices of the pointed simplicial set $X_k$ is a discrete $G$-set), then 
there is a continuous homotopy fixed point spectrum $X^{hG}$ 
\cite[Section 3.1]{joint} (and we use this notation for the rest of this paper). 
Thus, to address the above difficulty, the author showed in \cite[Section 1.2]{padicspectra2} that $K(KU_p)
\wedge v_2^{-1}V(1)$, with $v_2^{-1}V(1)$ equipped with the trivial $\G$-action, 
can be realized as a discrete $\G$-spectrum -- written as 
$C^\mathrm{dis}_p$ in \cite{padicspectra2}, and hence, one can form 
\[(K(KU_p)
\wedge v_2^{-1}V(1))^{h\mathbb{Z}_p^\times} := (C^\mathrm{dis}_p)^{h\G}\] 
and, by \cite[Theorem 1.8]{padicspectra2}, 
the map $i$ induces a canonical map 
\[i' \: K(L_{K(1)}(S^0)) \wedge v_2^{-1}V(1) \to (K(KU_p)
\wedge v_2^{-1}V(1))^{h\mathbb{Z}_p^\times}.\] 

\begin{Rk}
According to \cite[Remark 1.5]{padicspectra2}, the relationship between 
the target of $i'$ and $\mathbb{K} \wedge v_2^{-1}V(1)$, where $\mathbb{K}$ 
denotes the aforementioned candidate model for 
$K(KU_p)^{h\mathbb{Z}_p^\times}$, is unclear.
\end{Rk}

Now we make some observations to understand the relationship between the map 
$i'$ and the conjectural equivalence in (\ref{themap}). If $X$ is a discrete $\G$-spectrum and $Y$ is a 
finite spectrum with trivial $\G$-action, 
then $X \wedge Y$ is a discrete $\G$-spectrum and, by \cite[Remark 7.16]{cts},
\begin{equation}\label{instanceA}\zig
(X \wedge Y)^{h\G} \simeq X^{h\G} \wedge Y.\end{equation}
More generally, if $\{X_i\}_{i \in I}$ is a diagram of discrete $\G$-spectra indexed by a cofiltered category $I$, then the equivalence
\[(\holim_i X_i) \wedge Y \simeq \holim_i (X_i \wedge Y)\] implies that it is 
natural to make the definition 
\[((\holim_i X_i) \wedge Y)^{h\G} := (\holim_i (X_i \wedge Y))^{h\G} = \holim_i (X_i \wedge Y)^{h\G},\] where the last step applies \cite[Section 4.4]{joint}, and thus, 
we have
\begin{equation}\label{instanceB}\zig
((\holim_i X_i) \wedge Y)^{h\G} \simeq (\holim_i X_i)^{h\G} \wedge Y,\end{equation}
because
\begin{align*}
\holim_i (X_i \wedge Y)^{h\G} \simeq 
\holim_i &((X_i)^{h\G} \wedge Y) \\ &\simeq (\holim_i (X_i)^{h\G}) \wedge Y
 = (\holim_i X_i)^{h\G} \wedge Y.\end{align*} 
Also, by \cite{padicspectra2}, for each $j \geq 0$, $K(KU_p) \wedge \Sigma^{-jd}V(1)$ 
can be realized as a discrete $\G$-spectrum, and hence, there is 
$(K(KU_p) \wedge \Sigma^{-jd}V(1))^{h\G}.$
Then since each $\Sigma^{-jd}V(1)$ is a finite spectrum with trivial $\G$-action, 
the pattern in (\ref{instanceA}) and (\ref{instanceB}) suggests that 
there should be an equivalence 
\begin{equation}\label{hope}\zig
K(KU_p)^{h\mathbb{Z}_p^\times} \wedge \Sigma^{-jd}V(1) 
\overset{?}{\simeq} (K(KU_p) \wedge \Sigma^{-jd}V(1))^{h\G}.\end{equation} 
Here and elsewhere, we place a ``?" over a relation to indicate that it is not 
known to be true, but it is desired and expected to some degree.

Now notice that there is the isomorphism 
\begin{equation}\label{iso}\zig
K(KU_p)^{h\mathbb{Z}_p^\times} \wedge v_2^{-1}V(1) 
\cong
 \colim_{j \geq 0}(K(KU_p)^{h\mathbb{Z}_p^\times} \wedge \Sigma^{-jd}V(1))\end{equation} 
 and, by \cite[Theorem 1.7]{padicspectra2}, there is 
an equivalence 
\begin{equation}\label{equiv1}\zig
(K(KU_p) \wedge v_2^{-1}V(1))^{ h\mathbb{Z}_p^\times} 
\simeq \colim_{j \geq 0} \mspace{1mu}(K(KU_p) \wedge \Sigma^{-jd}V(1))^{h\mathbb{Z}_p^\times}.\end{equation} 
Thus, (\ref{hope})--(\ref{equiv1}) imply that there should be an 
equivalence 
\begin{equation}\label{equiv2}\zig
K(KU_p)^{h\mathbb{Z}_p^\times} \wedge v_2^{-1}V(1) 
\overset{?}{\simeq}
(K(KU_p) \wedge v_2^{-1}V(1))^{ h\mathbb{Z}_p^\times},\end{equation} and this observation suggests that if 
(\ref{equiv2}) holds and $i'$ is a weak equivalence, then one should be able to prove that the map in (\ref{themap}) is a weak equivalence, and thereby verify the conjecture of Ausoni and Rognes. This potentially fruitful strategy for 
proving this conjecture involves computing 
\[\pi_\ast\bigl((K(KU_p) \wedge v_2^{-1}V(1))^{h\mathbb{Z}_p^\times}\bigr),\] 
and thus, in this paper, we make progress on this computation by showing that it is 
a 
direct sum of two pieces given by invariants and coinvariants involving the 
$\G$-action on $\pi_\ast(K(KU_p) \wedge v_2^{-1}V(1))$. Additionally, with 
\[L_p := (KU_p)^{h\mathbb{Z}/((p-1)\zee)}\] (as in \cite{acta}), the $p$-complete Adams summand and a commutative $S^0$-algebra, where $\mathbb{Z}/((p-1)\zee)$ is the usual subgroup of $\G$, we show that the direct sum can be expressed as 
invariants and coinvariants of the $\padics$-action on 
$\pi_\ast(K(L_p) \wedge v_2^{-1}V(1))$.

Given a profinite group $G$ and a discrete $G$-spectrum $X$, if $H$ is any closed subgroup of $G$, then 
$H$ is a profinite group, $X$ is a discrete $H$-spectrum (by restriction of the 
$G$-action), and hence, there is the continuous homotopy fixed point spectrum $X^{hH}$. Our work for the above computation is in line with this multiplicity of 
possibilities: our result is not just for the $\G$-homotopy fixed points, but is for the homotopy fixed points of any closed subgroup (though the aforementioned presentation 
involving $L_p$ is only for the case $G=H=\G$). 

\subsection{The main results}

In (\ref{equiv2}) above, we said that there should be an equivalence
\[(-)^{h\mathbb{Z}_p^\times} \wedge v_2^{-1}V(1) \overset{?}{\simeq} 
(K(KU_p) \wedge v_2^{-1}V(1))^{h\mathbb{Z}_p^\times},\] where 
the blank ``$-$" can be filled in with $K(KU_p)$. One of our intermediate steps 
in obtaining the results mentioned above is to give a way to fill in this blank with a discrete 
$\G$-spectrum that is related to $K(KU_p)$.  

\begin{Thm}\label{prehfp}
Let  $p \geq 5$ and let $G$ be a closed subgroup of $\G$. There is a discrete $\G$-spectrum $(K(KU_p))^\mathrm{dis}_\mathcal{O}$ with the property that 
for each $j \geq 0$, there is an equivalence 
\[\bigl((K(KU_p))^\mathrm{dis}_\mathcal{O}\bigr)^{\mspace{-3mu}hG} \wedge \Sigma^{-jd}V(1) 
\simeq 
(K(KU_p) \wedge \Sigma^{-jd}V(1))^{\mspace{-0mu}hG},\] and
\[\bigl((K(KU_p))^\mathrm{dis}_\mathcal{O}\bigr)^{\mspace{-3mu}hG} \wedge
v_2^{-1}V(1) \simeq 
(K(KU_p) \wedge v_2^{-1}V(1))^{hG}.\]
\end{Thm}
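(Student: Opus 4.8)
The plan is to build $(K(KU_p))^\mathrm{dis}_\mathcal{O}$ as a discretization of $K(KU_p)$, carrying its $\G$-action by $p$-adic Adams operations, relative to the site $\mathcal{O}$ of finite discrete $\G$-sets and within the framework of discrete $\G$-spectra of \cite{joint, cts}: one regards $K(KU_p)$ as a presheaf of spectra on $\mathcal{O}$ by $\G/U \mapsto \mathrm{Map}(\G/U, K(KU_p))$, with $\G$ acting diagonally, and then sheafifies. First I would record the formal properties of the resulting functor $(-)^\mathrm{dis}_\mathcal{O}$ on spectra-with-$\G$-action: it is functorial; since each $\G/U$ is finite and sheafification is assembled from homotopy colimits, $(-)^\mathrm{dis}_\mathcal{O}$ commutes with smashing by any finite spectrum with the trivial $\G$-action and with the filtered homotopy colimit $v_2^{-1}V(1) = \colim_j \Sigma^{-jd}V(1)$; and if the $\G$-action on $\pi_\ast Z$ is smooth, the comparison map $(Z)^\mathrm{dis}_\mathcal{O} \to Z$ is a weak equivalence, so $(Z)^\mathrm{dis}_\mathcal{O}$ recovers the discrete $\G$-spectrum structure already carried by such a $Z$.

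Next I would invoke \cite{padicspectra2}: for every $j \geq 0$ the spectrum $K(KU_p) \wedge \Sigma^{-jd}V(1)$ and the spectrum $K(KU_p) \wedge v_2^{-1}V(1) = C^\mathrm{dis}_p$ are realized as discrete $\G$-spectra, the smoothness in play being that of the $\G$-modules $K(KU_p)_\ast(\Sigma^{-jd}V(1))$ and $K(KU_p)_\ast(V(1))[v_2^{-1}]$. Hence $(-)^\mathrm{dis}_\mathcal{O}$ applied to $K(KU_p) \wedge \Sigma^{-jd}V(1)$ returns that discrete $\G$-spectrum up to equivalence, and the commutations above give equivalences of $j$-indexed diagrams of discrete $\G$-spectra
\[(K(KU_p))^\mathrm{dis}_\mathcal{O} \wedge \Sigma^{-jd}V(1) \simeq \bigl(K(KU_p) \wedge \Sigma^{-jd}V(1)\bigr)^\mathrm{dis}_\mathcal{O}, \qquad (K(KU_p))^\mathrm{dis}_\mathcal{O} \wedge v_2^{-1}V(1) \simeq C^\mathrm{dis}_p .\]
Restricting the $\G$-action along $G \hookrightarrow \G$ keeps these as equivalences of discrete $G$-spectra.

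The theorem then follows by the kind of bookkeeping used in (\ref{instanceA})--(\ref{equiv1}). For fixed $j$, applying (\ref{instanceA}) — valid for any profinite group, hence for $G$, because $\Sigma^{-jd}V(1)$ is finite — with $X = (K(KU_p))^\mathrm{dis}_\mathcal{O}$ and $Y = \Sigma^{-jd}V(1)$ gives
\[\bigl((K(KU_p))^\mathrm{dis}_\mathcal{O}\bigr)^{hG} \wedge \Sigma^{-jd}V(1) \simeq \bigl((K(KU_p))^\mathrm{dis}_\mathcal{O} \wedge \Sigma^{-jd}V(1)\bigr)^{hG} \simeq \bigl(K(KU_p) \wedge \Sigma^{-jd}V(1)\bigr)^{hG} .\]
Since smashing with $v_2^{-1}V(1)$ is the filtered homotopy colimit of smashing with the $\Sigma^{-jd}V(1)$, these equivalences assemble, compatibly over $j$, to
\[\bigl((K(KU_p))^\mathrm{dis}_\mathcal{O}\bigr)^{hG} \wedge v_2^{-1}V(1) \simeq \colim_j \bigl(K(KU_p) \wedge \Sigma^{-jd}V(1)\bigr)^{hG} \simeq \bigl(K(KU_p) \wedge v_2^{-1}V(1)\bigr)^{hG} ,\]
where the last equivalence is the closed-subgroup analogue, for $G$, of \cite[Theorem 1.7]{padicspectra2}, i.e.\ of (\ref{equiv1}).

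The hard part is the middle step: not the purely formal fact that $(-)^\mathrm{dis}_\mathcal{O}$ commutes with $-\wedge\Sigma^{-jd}V(1)$, but the identification of $(K(KU_p))^\mathrm{dis}_\mathcal{O}\wedge\Sigma^{-jd}V(1)$ with the discrete $\G$-spectrum model of \cite{padicspectra2} \emph{as discrete $\G$-spectra and compatibly with the $v_2$-self-maps} — only with that refinement does $(-)^{hG}$ transport the identification and interact correctly with the colimit. Carrying it out requires unwinding how those models are built from the smoothness of $K(KU_p)_\ast(V(1))[v_2^{-1}]$ and checking that $(-)^\mathrm{dis}_\mathcal{O}$ does not alter the homotopy type there, together with establishing the closed-subgroup version of \cite[Theorem 1.7]{padicspectra2}; the rest is the routine homotopy (co)limit manipulation already on display in the introduction.
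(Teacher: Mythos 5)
Your outline is essentially the paper's own strategy: build a discretization $(K(KU_p))^\mathrm{dis}_\mathcal{O}$ of $K(KU_p)$, show that smashing with the finite spectra $\Sigma^{-jd}V(1)$ passes through both the discretization and $(-)^{hG}$, and then handle the telescope by taking the colimit over $j$ and invoking \cite[Theorem 1.7]{padicspectra2} (which, as recorded in (\ref{colim}), is already stated for arbitrary closed $G \leq \G$, so no new closed-subgroup analogue is needed). The one substantive correction concerns where you locate the difficulty. In the paper, $\mathcal{O} = \{p^m\padics\}_{m\geq 0}$ and $(K(KU_p))^\mathrm{dis}_\mathcal{O} = \colim_m \holim_\Delta \mathrm{Sets}(\G^{\bullet+1}, (K(KU_p))_f)^{p^m\padics}$ (Definition \ref{xdisO}), which is literally the same construction that \cite{padicspectra2} applies to $K(KU_p)\wedge\Sigma^{-jd}V(1)$ in order to \emph{define} $(K(KU_p)\wedge\Sigma^{-jd}V(1))^{hG}$; consequently the step you single out as the hard part --- matching $(K(KU_p))^\mathrm{dis}_\mathcal{O}\wedge\Sigma^{-jd}V(1)$ against the model of \cite{padicspectra2} by ``unwinding smoothness'' of $K(KU_p)_\ast(V(1))[v_2^{-1}]$ and checking that $(-)^\mathrm{dis}_\mathcal{O}$ preserves the homotopy type --- is definitional and uses no smoothness hypothesis and no equivalence of the form $(Z)^\mathrm{dis}_\mathcal{O}\simeq Z$ (indeed no such equivalence is claimed for $K(KU_p)$ itself, which is exactly why the theorem is phrased with $(K(KU_p))^\mathrm{dis}_\mathcal{O}$ filling the blank). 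The entire content of the theorem is the commutation you labeled ``purely formal,'' and it does require care: the paper (Section \ref{step3}) carries it out at the level of the cosimplicial spectra $\Gamma_G^\bullet(-)$, commuting $-\wedge\Sigma^{-jd}V(1)$ past the products in $\mathrm{Sets}(\G^{\bullet+1},-)^{p^m\padics}$, past $\holim_\Delta$, and past the colimits in $(-)^\mathrm{dis}_\mathcal{O}$ and $\Gamma_G^\bullet$, with explicit fibrant-replacement bookkeeping, to get the zigzag (\ref{reedy}) and then Theorem \ref{hfp} via (\ref{Godement}); the justification is finiteness/dualizability of $V(1)$ (smash commutes with products and homotopy limits), not ``sheafification assembled from homotopy colimits,'' so your stated reason should be repaired even though the conclusion is right, and your alternative packaging via (\ref{instanceA}) for the subgroup $G$ is fine for the same reason.
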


The spectrum $(K(KU_p))^\mathrm{dis}_\mathcal{O}$ is defined in Definition \ref{xdisO}, with $\mathcal{O}$ specified at the beginning of Section \ref{step3}, 
the first equivalence in Theorem \ref{prehfp} is Theorem \ref{hfp}, and 
the last equivalence follows immediately from the first one and the general version 
of (\ref{equiv1}) that is stated in (\ref{colim}).

After the following prefatory remarks, we state our result that for any closed 
subgroup $G$ in $\G$, $\pi_\ast\bigl((K(KU_p) \wedge v_2^{-1}V(1))^{hG}\bigr)$ 
can be reduced to a direct sum.  

Recall that $\padics$ is the pro-$p$ completion of $\zee$ and $\zee$ can 
be regarded as a subset of $\padics$ in a way that makes the inclusion 
$\zee \hookrightarrow \padics$ a ring homomorphism. We define
\[C_{p-1} : = \mathbb{Z}/((p-1)\mathbb{Z})\] to be the cyclic group 
of order $p-1$ and recall that 
\[\G \cong \padics \times \cyclic\] (since $p \geq 5$). If $M$ is a $\zee[\padics]$-module (so that $\padics$ acts on $M$), then $M$ is naturally a $\zee[\zee]$-module, and $M_{_{\mspace{-.2mu}\scriptstyle{\zee}}}$ denotes the coinvariants. If $K$ is a closed subgroup of a profinite group $H$ and $A$ is a discrete $K$-module, we let $\mathrm{Coind}^H_K(A)$ denote the coinduced discrete $H$-module of continuous $K$-equivariant functions $H \to A$. 

Let $P(v_2) = \mathbb{F}_p[v_2]$ denote the polynomial algebra over $\mathbb{F}_p$ generated by the periodic element $v_2 \in 
\pi_{2p^2-2}(V(1))$. Also, $P(v_2^{\pm 1}) = \mathbb{F}_p[v_2, v_2^{-1}]$ is the 
algebra of Laurent polynomials on $v_2$. To help manage the typography in the 
upcoming text, for a closed subgroup $G$ of $\G$, we let 
\[\mathbb{KV}(p,G) := \bigl((K(KU_p))^\mathrm{dis}_\mathcal{O}\bigr)^{\mspace{-3mu}hG} \wedge v_2^{-1}V(1)\] and 
\[\mathcal{E}(p,G)_\ast := H^1_c(G, \pi_{\ast}(K(KU_p) \wedge v_2^{-1}V(1))),\] 
a graded continuous cohomology group with coefficients in the stated discrete $G$-module. 

\begin{Thm}\label{directsum}
Let $p \geq 5$ and let $G$ be any closed subgroup of $\G$. There is an isomorphism 
\[\pi_\ast\bigl((K(KU_p) \wedge v_2^{-1}V(1))^{hG}\bigr) \cong 
\pi_{\ast}(\mathbb{KV}(p,G)),\] 
where the right-hand side is the middle term in a
short exact sequence 
\[0 \to \mathcal{E}(p,G)_{\ast+1} \to \pi_{\ast}(\mathbb{KV}(p,G)) \to \bigl(\pi_{\ast}(K(KU_p) \wedge v_2^{-1}V(1))\bigr)^{\mspace{-3mu}G} \to 0\] of $P(v_2^{\pm1})$-modules. 
In particular, in each degree $t$, where $t \in \zee$, this sequence is a split exact sequence  of $\,\mathbb{F}_p$-modules and there is an 
isomorphism
\begin{align*}
\pi_t\bigl(&(K(KU_p) \wedge v_2^{-1}V(1))^{hG}\bigr) \\ & \cong 
\bigl(\mspace{-2mu}\bigl(\mathrm{Coind}^{\G}_{G}\mspace{-4mu}(\pi_{t+1}(K(KU_p) \wedge v_2^{-1}V(1)))\bigr)^{\mspace{-2mu}C_{p-1}}\bigr)_{_{\mspace{-2.5mu}\scriptstyle{\zee}}} \oplus 
\bigl(\pi_{t}(K(KU_p) \wedge v_2^{-1}V(1))\bigr)^{\mspace{-3mu}G}\end{align*} of abelian groups, where in the direct sum, the left summand is isomorphic to 
$\mathcal{E}(p,G)_{t+1}$.
\end{Thm}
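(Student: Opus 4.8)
The plan is to deduce everything from Theorem \ref{prehfp} together with the descent spectral sequence for continuous homotopy fixed points and standard facts about the continuous cohomology of $\G \cong \padics \times \cyclic$. First I would use the second equivalence of Theorem \ref{prehfp} to identify $\pi_\ast\bigl((K(KU_p) \wedge v_2^{-1}V(1))^{hG}\bigr)$ with $\pi_\ast(\mathbb{KV}(p,G)) = \pi_\ast\bigl(((K(KU_p))^\mathrm{dis}_\mathcal{O})^{hG} \wedge v_2^{-1}V(1)\bigr)$, which is legitimate since $(K(KU_p))^\mathrm{dis}_\mathcal{O}$ is a discrete $\G$-spectrum (hence a discrete $G$-spectrum by restriction) and $v_2^{-1}V(1)$ is a filtered colimit of finite spectra with trivial action, so $(\ref{colim})$ applies. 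The key input is then the strongly convergent descent spectral sequence
\[E_2^{s,t} = H^s_c\bigl(G, \pi_t(K(KU_p) \wedge v_2^{-1}V(1))\bigr) \Longrightarrow \pi_{t-s}(\mathbb{KV}(p,G)).\]
The crucial structural fact is that this $E_2$-page is concentrated in cohomological degrees $s = 0$ and $s = 1$: since $p \geq 5$, $\G \cong \padics \times \cyclic$, the cyclic factor $\cyclic$ has order prime to $p$ so it contributes no higher cohomology with $\mathbb{F}_p$-coefficients, and $\padics$ has cohomological dimension $1$; a Lyndon--Hochschild--Serre / Künneth argument for continuous cohomology then collapses everything into the two columns $s=0,1$. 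Consequently every differential vanishes for degree reasons, the spectral sequence degenerates at $E_2$, and the only possible extension is the two-term filtration giving the short exact sequence
\[0 \to H^1_c\bigl(G, \pi_{\ast+1}(K(KU_p) \wedge v_2^{-1}V(1))\bigr) \to \pi_\ast(\mathbb{KV}(p,G)) \to H^0_c\bigl(G, \pi_\ast(K(KU_p) \wedge v_2^{-1}V(1))\bigr) \to 0,\]
which is exactly the asserted sequence once we recall $\mathcal{E}(p,G)_{\ast+1} = H^1_c(G,\pi_{\ast+1}(\cdots))$ and $H^0_c(G,M) = M^G$. The $P(v_2^{\pm1})$-module structure is inherited from the module structure of the spectral sequence over $\pi_\ast(v_2^{-1}V(1)) \supseteq P(v_2^{\pm1})$, and the naturality of the filtration makes the maps $P(v_2^{\pm1})$-linear.

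For the final "in particular" statement I would argue degree by degree: in each integer degree $t$, all three terms are $\mathbb{F}_p$-vector spaces (because $K(KU_p)_\ast(V(1))[v_2^{-1}]$ is an $\mathbb{F}_p$-module, being built from $V(1)$, and continuous cohomology of an $\mathbb{F}_p$-module is an $\mathbb{F}_p$-module), so the short exact sequence is a sequence of $\mathbb{F}_p$-vector spaces and therefore splits, yielding the direct-sum decomposition with left summand $\mathcal{E}(p,G)_{t+1} \cong H^1_c(G,\pi_{t+1}(\cdots))$ and right summand $\pi_t(\cdots)^G$. It then remains to rewrite $H^1_c(G,-)$ in the coinduced form. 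Here I would use Shapiro's lemma for continuous cohomology: $H^1_c(G, N) \cong H^1_c\bigl(\G, \mathrm{Coind}^{\G}_G(N)\bigr)$ for a discrete $G$-module $N$, and then compute $H^1_c$ over $\G \cong \padics \times \cyclic$. Since $|\cyclic|$ is prime to $p$, taking $\cyclic$-invariants is exact on $\mathbb{F}_p$-modules, so the Lyndon--Hochschild--Serre spectral sequence for $1 \to \padics \to \G \to \cyclic \to 1$ collapses to give $H^1_c(\G, A) \cong H^1_c\bigl(\padics, A^{\cyclic}\bigr)$; finally $H^1_c(\padics, B) \cong B_{\zee}$ (the $\padics$-, equivalently $\zee$-, coinvariants) because $\padics$ is procyclic of cohomological dimension $1$ and $H^1$ of a procyclic group is computed by the "difference by a topological generator" cokernel, which for a discrete module is the coinvariants. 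Chaining these identifications gives $\mathcal{E}(p,G)_{t+1} \cong \bigl((\mathrm{Coind}^{\G}_G(\pi_{t+1}(\cdots)))^{\cyclic}\bigr)_{\zee}$, as claimed.

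The main obstacle I anticipate is not any single hard computation but making the cohomological-dimension / two-column collapse fully rigorous in the \emph{continuous} (profinite) setting rather than the discrete-group setting: one must check that the relevant modules are genuinely discrete $G$-modules (which follows from the discrete $\G$-spectrum structure on $(K(KU_p))^\mathrm{dis}_\mathcal{O}$, via Theorem \ref{prehfp} and the results of \cite{joint} on the descent spectral sequence), that strong convergence holds (the spectral sequence of a discrete $G$-spectrum converges conditionally, and the two-column concentration upgrades this to strong convergence with no $\limone$ obstruction), and that Shapiro's lemma and the Lyndon--Hochschild--Serre spectral sequence are available in the form needed for continuous cohomology of profinite groups with coefficients in discrete modules — all standard, but requiring care. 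A secondary point to verify is that the $v_2$-periodicity genuinely makes each homotopy group of $\mathbb{KV}(p,G)$ a module over $P(v_2^{\pm1})$ and that the short exact sequence is one of $P(v_2^{\pm1})$-modules; this is a naturality check on the edge maps of the descent spectral sequence and should be routine.
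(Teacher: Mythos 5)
Your overall strategy is the same as the paper's (descent spectral sequence, two-column vanishing, degreewise splitting, then Shapiro plus Lyndon--Hochschild--Serre plus the procyclic identification $H^1_c(\padics,B)\cong B_{\zee}$), but there is a genuine gap at the one point you set aside as routine: the claim that the short exact sequence is one of $P(v_2^{\pm1})$-modules, i.e.\ that the spectral sequence itself carries a compatible $P(v_2)$-module structure. The spectral sequence converging to $\pi_\ast\bigl((K(KU_p)\wedge v_2^{-1}V(1))^{hG}\bigr)$ is, by construction, the homotopy spectral sequence of $\holim_\Delta\Gamma^\bullet_G$ applied to the discrete model $C^{\mathrm{dis}}_p$, in which $\Sigma^{-jd}V(1)$ is entangled inside $\mathrm{Sets}((\G)^{\bullet+1},-)$, fixed points, homotopy limits and colimits; it is not an external smash factor there. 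Since $\Gamma^\bullet_G$ and the $\mathrm{Tot}$-tower are point-set constructions that do not act on the homotopy category, the homotopy ring structure of $V(1)$ acting on the abutment does not automatically propagate to the exact couple, so ``naturality of the filtration'' is not available off the shelf; nor can you simply smash the tower with $v_2^{-1}V(1)$, since the infinite telescope does not commute with $\holim_\Delta$/$\mathrm{Tot}$ (this is also why strong convergence at the telescope level needs an argument). Supplying this mechanism is exactly the content of Sections \ref{step3}--\ref{step5}: one proves the objectwise equivalence (\ref{reedy}) pulling $\Sigma^{-jd}V(1)$ outside the cosimplicial construction, rewrites each exact couple as one obtained by smashing fixed homotopy fiber sequences $\widehat{F}_l\to\mathrm{Tot}_l\to\mathrm{Tot}_{l-1}$ with the finite spectrum $\Sigma^{-jd}V(1)$ (hence long exact sequences of $P(v_2)$-modules), and only then takes the colimit over $j$ to get $P(v_2^{\pm1})$-linearity and strong convergence. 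Your proposal contains no such mechanism, so the $P(v_2^{\pm1})$-module clause of the theorem is unproved as written. (Note that the degreewise splitting and the direct sum of abelian groups survive without it, since all three terms are killed by $p$ and additive maps of $\mathbb{F}_p$-modules are $\mathbb{F}_p$-linear; but the theorem asserts more.)

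The rest of your outline is sound and close to the paper. The vanishing of $E_2^{s,t}$ for $s\geq 2$ and strong convergence are simply quoted in the paper from \cite[Theorem 1.7]{padicspectra2}; your cohomological-dimension rederivation is correct because the coefficients are discrete $p$-torsion modules and any closed $G\leq\G$ is an extension of a finite group of order prime to $p$ by a pro-$p$ group of cohomological dimension at most one. Your treatment of $\mathcal{E}(p,G)_{t+1}$ in fact streamlines Section \ref{step6}: applying Shapiro's lemma and the collapse directly to the full discrete module $\pi_{t+1}(K(KU_p)\wedge v_2^{-1}V(1))$, instead of the finite pieces $C^N_{(t,j)}$ with the $\mathrm{Ind}\cong\mathrm{Coind}$ bookkeeping, is legitimate, but you should justify it by writing the $p$-torsion discrete module as a filtered colimit of finite submodules and noting that continuous cohomology, $\cyclic$-invariants (exact here) and $\zee$-coinvariants all commute with such colimits. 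One small slip: you invoke the extension $1\to\padics\to\G\to\cyclic\to 1$, but the formula $H^1_c(\G,A)\cong H^1_c(\padics,A^{\cyclic})$ comes from the other factorization (with $\cyclic$ normal and quotient $\padics$), as in the paper; since $\G$ is a direct product this is harmless, but the spectral sequence should be set up accordingly.
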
 

The proof of this result is broken up into six steps: 
\begin{itemize}
\item
in Section \ref{stepi}, we use various homotopy fixed point spectral sequences to 
present $\pi_\ast\bigl((K(KU_p) \wedge v_2^{-1}V(1))^{hG}\bigr)$ as the middle term in a colimit of short exact 
sequences;
\item
Section \ref{step2} makes some recollections of several constructions that are 
needed to go further;
\item
for each $j \geq 0$, $(K(KU_p) 
\wedge \Sigma^{-jd}V(1))^{hG}$ is the continuous 
homotopy fixed points of, not literally, $K(KU_p) 
\wedge \Sigma^{-jd}V(1)$, but a discrete $\G$-spectrum equivalent to 
this $\G$-spectrum, and in Section \ref{step3}, we study the role of $V(1)$ in 
the construction of this discrete $\G$-spectrum and its associated homotopy fixed point spectral 
sequence (and thereby prove Theorem \ref{prehfp}, of which the first isomorphism in 
Theorem \ref{directsum} is an immediate consequence); 
\item 
Section \ref{step4} shows that each of the just-mentioned spectral sequences 
is isomorphic to a spectral sequence in the category of $P(v_2)$-modules; 
\item
in Section \ref{step5}, we obtain the desired short exact sequence of $P(v_2^{\pm1})$-modules and the chief desideratum is shown to be a direct sum with 
its second summand as specified in Theorem \ref{directsum}; and 
\item
we obtain the isomorphism between 
$\mathcal{E}(p,G)_{t+1}$ and the expression involving $\mathbb{Z}$-coinvariants of 
$C_{p-1}$-invariants (in every integral degree $t$) in Section \ref{step6}.
\end{itemize}

\begin{Rk}\label{ctscoinvariants}
In Lemma \ref{not_restricted}, we show that in Theorem \ref{directsum}, for each integer $t$,
\[\mathcal{E}(p,G)_{t+1} \cong
\padics {\otimes}_{\padics[[\padics]]} \bigl(\bigl(\mathrm{Coind}^{\G}_{G}\mspace{-3mu}(\pi_{t+1}(K(KU_p) \wedge v_2^{-1}V(1)))\bigr)^{\mspace{-3mu}C_{p-1}}\bigr),\] where $\padics$ is regarded as a $\padics[[\padics]]$-module by giving $\padics$ the trivial $\padics$-group action. We give this result in case this 
form of $\mathcal{E}(p,G)_{t+1}$ is easier to compute than the $\zee$-coinvariants 
of Theorem \ref{directsum}. We point out that ``$\otimes_{\padics[[\padics]]}$" above denotes the usual tensor product (for the category of abstract $\padics[[\padics]]$-modules) and not a completed tensor product (formed in some category of topological $\padics[[\padics]]$-modules). 
\end{Rk} 

Now we focus on the case $G = \G$: our result in this case -- Corollary \ref{cor} below -- consists of three 
isomorphisms, and the first one is an immediate consequence of 
Theorem \ref{directsum}. As alluded to earlier, the last two isomorphisms involve 
$K(L_p)$, and so we note that $\pi_\ast(L_p) = \padics[v_1^{\pm 1}]$ and $L_p \simeq E(1)_p$, the $p$-completed 
first Johnson-Wilson spectrum. Also, we make explicit the following, which was 
implicitly referred to earlier: after taking $C_{p-1}$-homotopy fixed points to form $L_p$, 
there is a residual action by $\padics$ on $L_p$ 
through morphisms of commutative $S^0$-algebras, and hence, $K(L_p)$ carries a $\padics$-action. The telescope $v_2^{-1}V(1)$ is given the trivial $\padics$-action, 
and $K(L_p) \wedge v_2^{-1}V(1)$ is equipped with the diagonal $\padics$-action.

\begin{Cor}\label{cor}
Let $p \geq 5$. There are isomorphisms
\begin{align*}
\pi_\ast\bigl((K & (KU_p) \wedge v_2^{-1}V(1))^{h\G}\bigr)\\ & \cong 
\bigl(\mspace{-2mu}\bigl(\pi_{\ast+1}(K(KU_p) \wedge v_2^{-1}V(1))\bigr)^{\mspace{-2mu}C_{p-1}}\bigr)_{{\mspace{-1.5mu}\scriptstyle{\zee}}} \oplus 
\bigl(\pi_\ast(K(KU_p) \wedge v_2^{-1}V(1))\bigr)^{\G} \\ 
& \cong \bigl(\pi_{\ast+1}(K(L_p) \wedge v_2^{-1}V(1))\bigr)_{{\mspace{-1.5mu}\scriptstyle{\zee}}} \oplus \bigl(K(L_p)_\ast(V(1))[v_2^{-1}]\bigr)^{\padics} \\ 
& \cong \bigl(\padics \otimes_{\padics[[\padics]]}\mspace{-4mu}\bigl(\pi_{\ast+1}(K(L_p) \wedge v_2^{-1}V(1))\bigr)\bigr) \oplus \bigl(K(L_p)_\ast(V(1))[v_2^{-1}]\bigr)^{\padics}.\end{align*}
\end{Cor}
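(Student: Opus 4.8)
The plan is to derive Corollary \ref{cor} as a sequence of three isomorphisms, each reducing the computation further in the special case $G = \G$.

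\textbf{First isomorphism.} This is immediate from Theorem \ref{directsum} with $G = \G$: the coinduced module $\mathrm{Coind}^{\G}_{\G}(-)$ is just the identity functor (continuous $\G$-equivariant functions $\G \to A$ are the same as elements of $A$), so the left summand of the direct sum in Theorem \ref{directsum} collapses to $\bigl(\bigl(\pi_{\ast+1}(K(KU_p) \wedge v_2^{-1}V(1))\bigr)^{C_{p-1}}\bigr)_{\zee}$ and the right summand is $\bigl(\pi_\ast(K(KU_p) \wedge v_2^{-1}V(1))\bigr)^{\G}$. There is nothing further to prove here.

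\textbf{Second isomorphism.} The plan is to compare $K(KU_p)$ with $K(L_p)$ via the residual $\padics$-action left over after taking $C_{p-1}$-homotopy fixed points. First I would use that $L_p = (KU_p)^{h\zee/((p-1)\zee)} = (KU_p)^{hC_{p-1}}$, and that, since $p \geq 5$, the order of $C_{p-1}$ is prime to $p$; smashing with the type $2$ complex $V(1)$ (whose homotopy is $p$-torsion) makes $C_{p-1}$-homotopy fixed points agree with genuine fixed points (the homotopy fixed point spectral sequence for a finite group acting on a $p$-local spectrum collapses, as the higher cohomology of a group of order prime to $p$ with coefficients in an $\mathbb{F}_p$-module vanishes). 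Thus on the level of homotopy groups, applying $(-)^{\G} = ((-)^{C_{p-1}})^{\padics}$ to $\pi_\ast(K(KU_p) \wedge v_2^{-1}V(1))$ produces the $\padics$-invariants of $\pi_\ast(K(L_p) \wedge v_2^{-1}V(1)) = K(L_p)_\ast(V(1))[v_2^{-1}]$, giving the right summand. For the left summand, the same reasoning identifies $\bigl(\pi_{\ast+1}(K(KU_p) \wedge v_2^{-1}V(1))\bigr)^{C_{p-1}}$ with $\pi_{\ast+1}(K(L_p) \wedge v_2^{-1}V(1))$, and taking $\zee$-coinvariants of both sides gives $\bigl(\pi_{\ast+1}(K(L_p) \wedge v_2^{-1}V(1))\bigr)_{\zee}$. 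One has to be a little careful that the residual $\padics$-action used here agrees with the one described in the paragraph preceding the corollary (the action through commutative $S^0$-algebra maps, induced by functoriality of $K(-)$ from the $\padics$-action on $L_p$), but this is exactly how the $C_{p-1}$-fixed-point construction interacts with the splitting $\G \cong \padics \times C_{p-1}$.

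\textbf{Third isomorphism.} This rewrites the left summand of the second line using Remark \ref{ctscoinvariants} (equivalently Lemma \ref{not_restricted}) specialized to $G = \G$: since $\mathrm{Coind}^{\G}_{\G}$ is the identity and, as just argued, $C_{p-1}$-invariants of $\pi_{t+1}(K(KU_p) \wedge v_2^{-1}V(1))$ coincide with $\pi_{t+1}(K(L_p) \wedge v_2^{-1}V(1))$, the cited identification of $\mathcal{E}(p,\G)_{t+1}$ becomes $\padics \otimes_{\padics[[\padics]]} \bigl(\pi_{t+1}(K(L_p) \wedge v_2^{-1}V(1))\bigr)$, where $\padics$ carries the trivial $\padics$-module structure; this is the standard identification of $\zee$-coinvariants of a $\padics$-module $M$ with the $\padics$-coinvariants $\padics \otimes_{\padics[[\padics]]} M$ (because $M$, being $p$-torsion, already has a $\padics[[\padics]]$-module structure extending its $\zee[\zee]$-structure). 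The right summand is unchanged.

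\textbf{Main obstacle.} The routine bookkeeping with spectral sequences and coinvariants is harmless; the one genuinely delicate point is the second isomorphism — verifying that the $C_{p-1}$-homotopy-fixed-point passage from $K(KU_p) \wedge v_2^{-1}V(1)$ to $K(L_p) \wedge v_2^{-1}V(1)$ is compatible, as $\padics$-modules on homotopy groups, with the functorially-induced $\padics$-action on $K(L_p)$, and that taking $C_{p-1}$-fixed points commutes with smashing with $v_2^{-1}V(1)$ and with passing to homotopy groups in the required way. Once the action is matched up correctly, everything else is formal manipulation of the already-established Theorem \ref{directsum} and Lemma \ref{not_restricted}.
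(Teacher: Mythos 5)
Your first and third isomorphisms are handled essentially as in the paper: the first is immediate from Theorem \ref{directsum} with $G = \G$ (where $\mathrm{Coind}^{\G}_{\G}$ is the identity), and the third is an application of Remark \ref{ctscoinvariants}/Lemma \ref{not_restricted}. The problem is the second isomorphism, where your argument has a genuine gap: you treat the identification $\bigl(\pi_\ast(K(KU_p) \wedge v_2^{-1}V(1))\bigr)^{C_{p-1}} \cong \pi_\ast(K(L_p) \wedge v_2^{-1}V(1))$ as a formal consequence of $L_p = (KU_p)^{hC_{p-1}}$ together with the collapse of the $C_{p-1}$-homotopy fixed point spectral sequence. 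The collapse argument (order of $C_{p-1}$ prime to $p$, $p$-torsion coefficients) only yields
\[
\bigl(\pi_\ast(K(KU_p) \wedge v_2^{-1}V(1))\bigr)^{C_{p-1}} \cong \pi_\ast\bigl((K(KU_p))^{hC_{p-1}} \wedge v_2^{-1}V(1)\bigr),
\]
and passing from there to $\pi_\ast(K(L_p) \wedge v_2^{-1}V(1))$ requires knowing that algebraic $K$-theory commutes with these homotopy fixed points, i.e., that the natural map $K(L_p) = K\bigl((KU_p)^{hC_{p-1}}\bigr) \to (K(KU_p))^{hC_{p-1}}$ becomes an equivalence after smashing with $V(1)$. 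That is not formal, and no argument about group orders prime to $p$ supplies it: it is the Galois-descent theorem of Ausoni and Rognes that this map is an equivalence after $p$-completion (\cite{rognesicm}, proof in \cite{jems}), which is exactly the input the paper feeds into Section \ref{lastmaybe}. There the $p$-adic equivalence is upgraded to a $V(0)$-equivalence, then to a $V(1)$-equivalence via the cofiber sequence $\Sigma^{2p-2}V(0) \xrightarrow{v_1} V(0) \to V(1)$, and then combined with the telescope colimit and the collapsing $C_{p-1}$-spectral sequences to obtain the $\padics$-equivariant isomorphism $\bigl(\pi_\ast(K(KU_p) \wedge v_2^{-1}V(1))\bigr)^{C_{p-1}} \cong K(L_p)_\ast(V(1))[v_2^{-1}]$.

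Relatedly, the issue you flag as the ``main obstacle''---matching the residual $\padics$-action---is the easy part: once the comparison map $K(L_p) \to K(KU_p)^{hC_{p-1}}$ is in hand, it is $\padics$-equivariant by construction, so all the displayed isomorphisms are $\padics$-equivariant, as the paper notes at the end of Section \ref{lastmaybe}. The genuinely nontrivial point you must add is the descent equivalence itself; without citing it, your step ``applying $(-)^{C_{p-1}}$ produces $\pi_\ast(K(L_p) \wedge v_2^{-1}V(1))$'' conflates $(K(KU_p))^{hC_{p-1}}$ with $K\bigl((KU_p)^{hC_{p-1}}\bigr)$ and is unjustified.
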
 

\begin{Rk}
As discussed in more detail in Section \ref{1.3}, the Ausoni-Rognes conjecture suggests that for $p \geq 5$, the direct sum in Corollary \ref{cor} -- expressed in three different, but isomorphic, ways -- is a conjectural description of 
\[\pi_\ast(K(L_{K(1)}(S^0)) \wedge v_2^{-1}V(1)) \cong K(L_{K(1)}(S^0))_\ast(V(1))[v_2^{-1}],\] and it seems that it would be helpful to have a more explicit form of this 
direct sum. We note that 
\cite[page 4; Theorem 1.5]{GabeTHH} describes a strategy for computing 
$\pi_\ast(K(L_{K(1)}(S^0)) \wedge V(1))$ and gives a result that begins making progress on this strategy.
\end{Rk}

The second isomorphism of Corollary \ref{cor} comes from the first one and the fact that there is a 
$\padics$-equivariant isomorphism
\[(\pi_\ast(K(KU_p) \wedge v_2^{-1}V(1)))^{C_{p-1}} 
\cong K(L_p)_\ast(V(1))[v_2^{-1}],\] which is deduced in Section \ref{lastmaybe} 
from the fact that 
$K(L_p)$ and $K(KU_p)^{hC_{p-1}}$ are equivalent after $p$-completion (for this 
equivalence, see 
\cite[the sentence above Remark 4.4]{rognesicm}; a proof is in \cite[pages 11-12]{jems}). The third isomorphism in the corollary is an application of Remark \ref{ctscoinvariants}. 

\subsection{Considerations for the future, terminology, and notation}\label{1.3}

In our discussion of (\ref{equiv2}), we saw that proving that $i'$ is a weak equivalence would be a substantial step towards verifying the Ausoni-Rognes conjecture (more precisely, the instances described earlier of this general conjecture), and by 
Corollary \ref{cor}, this step can be done by showing that $i'$ induces 
an isomorphism 
\begin{align*}
K(L_{K(1)} & (S^0))_\ast(V(1))[v_2^{-1}] \\  
& \overset{?}{\cong} 
\bigl(\mspace{-2mu}\bigl(\pi_{\ast+1}(K(KU_p) \wedge v_2^{-1}V(1))\bigr)^{\mspace{-2mu}C_{p-1}}\bigr)_{{\mspace{-1.5mu}\scriptstyle{\zee}}} \oplus 
\bigl(\pi_\ast(K(KU_p) \wedge v_2^{-1}V(1))\bigr)^{\G}.
\end{align*} 
In \cite[Theorem 8.3]{ausoniinventiones}, under the assumption of two 
hypotheses, there is a description of the graded abelian group 
$\pi_\ast(K(KU_p) \wedge V(1))$ as a 
certain type of module (see [ibid.] for the details), and progress in verifying this description was 
made by \cite[page 2; Theorem 4.5]{BlumbergMandellMemoir}. 

Also by Corollary \ref{cor}, we see that another and perhaps easier way to 
take the aforementioned step is to prove that $i'$ induces 
an isomorphism
\[K(L_{K(1)}(S^0))_\ast(V(1))[v_2^{-1}] \mspace{-2mu} \overset{?}{\cong} \mspace{-2mu}
\bigl(\pi_{\ast+1}(K(L_p) \wedge v_2^{-1}V(1))\bigr)_{{\mspace{-3mu}\scriptstyle{\zee}}} \mspace{-2mu}\oplus \mspace{-4mu}\bigl(K(L_p)_\ast(V(1))[v_2^{-1}]\bigr)^{\padics}.\] We make a comment related to 
computing more explicitly the right-hand side of this conjectural isomorphism. 
By \cite{localization} (as conjectured in \cite[page 5]{acta}), there is a localization cofiber sequence
\[K(\padics) \to K(\ell_p) \to K(L_p) \to \Sigma K(\padics),\] where 
$\ell_p$ is the $p$-complete connective Adams summand, with $\pi_\ast(\ell_p) = \padics[v_1]$. Thus, there is the cofiber sequence 
\[K(\padics) \wedge V(1) \to K(\ell_p) \wedge V(1)\to K(L_p) \wedge V(1) \to \Sigma K(\padics) \wedge V(1),\] and as stated in \cite[page 1267]{rognesicm}, from explicit computations of $K(\padics)_\ast(V(1))$ (known by 
\cite{asterisque}; see also \cite[page 664]{ausoniinventiones}) and $K(\ell_p)_\ast(V(1))$ \cite[Theorem 9.1]{acta}, the long exact sequence for this 
cofiber sequence yields calculations of $K(L_p)_\ast(V(1))$, and some information about this is in 
\cite[Example 5.3]{rognesicm}. 

The author did not push the computation of the last-mentioned ``right-hand side" further and one reason is a lack of knowledge about the $\padics$-action on $K(L_p)_\ast(V(1))$. In this vein, we note that \cite[Remark 1.4]{acta} mentions a gap in 
understanding of how a certain Adams operation on $K(\ell_p)$ acts on a particular class in $K_{2p-1}(\ell_p)$ (we refer the reader to [ibid.] for the details).

In this paper, we always work in 
the category $Sp^\Sigma$ of symmetric spectra of 
simplicial sets, so that 
``spectrum" always means symmetric spectrum (except for a few places in 
this introduction, where the context makes the meaning clear). We let 
\[(-)_f \: Sp^\Sigma \rightarrow 
Sp^\Sigma, \ \ \ Z \mapsto Z_f\] 
denote a fibrant replacement functor, so that given the spectrum $Z$, 
there is a natural map $Z \rightarrow Z_f$ that is a trivial cofibration, 
with $Z_f$ 
fibrant. If $K$ is any group and $X$ is a $K$-spectrum, then 
$X_f$ is also 
a $K$-spectrum and the trivial cofibration 
$X \rightarrow X_f$ is $K$-equivariant.  

Given a spectrum $Z$ and an integer $t$, $\pi_t(Z)$ denotes $[S^t, Z]$, the 
set of morphisms $S^t \to Z$ in the homotopy category 
of symmetric spectra, where here, $S^t$ denotes a fixed cofibrant and fibrant model 
for the $t$-th suspension of the sphere spectrum. Outside of this introduction, 
``$\holim$" denotes the homotopy limit for $Sp^\Sigma$, as defined in 
\cite[Definition 18.1.8]{hirschhorn}. If $Z^\bullet$ is a cosimplicial spectrum that is 
objectwise fibrant, then by ``the homotopy spectral sequence for $\holim_\Delta Z^\bullet$," we mean the conditionally convergent spectral sequence 
\[E_2^{s,t} = H^s[\pi_t(Z^\ast)] \Longrightarrow \pi_{t-s}(\holim_\Delta Z^\bullet),\] 
where $\pi_t(Z^\ast)$ is the usual cochain complex associated to the cosimplicial 
abelian group $\pi_t(Z^\bullet)$.

\subsection*{Acknowledgement}
I thank Birgit Richter for helpful comments that were made after 
the results and proofs in this paper were completed. 

\section{Step i: a reduction to a colimit of short exact sequences}\label{stepi}

Let $G$ be any closed subgroup of $\G$. If $M$ is a discrete $G$-module, then we let $H^\ast_c(G,M)$ denote the continuous cohomology groups of $G$ with coefficients in $M$. By \cite[Theorem 1.7]{padicspectra2}, 
there is a 
strongly convergent homotopy spectral sequence $\{E_r^{\ast,\ast}\}_{r \geq 1} 
= \{E_r^{\ast,\ast}\}$ that 
has the form
\[E_2^{s,t} = H^s_c(G, \pi_t(K(KU_p) \wedge V(1))[v_2^{-1}]) \Longrightarrow 
\pi_{t-s}\bigl((K(KU_p) \wedge v_2^{-1}V(1))^{ hG}\bigr),\]
with $E_2^{s,t} = 0$, for all $s \geq 2$, $t \in \mathbb{Z}$. Since the $E_2$-page 
has only two nontrivial columns, there is a short 
exact sequence
\begin{equation}\label{ses1}\zig 
0 \to E_2^{1,t+1} \to \pi_{t}\bigl((K(KU_p) \wedge v_2^{-1}V(1))^{ hG}\bigr) \to E_2^{0,t} \to 0,\end{equation} 
for each $t \in \mathbb{Z}$. 

By \cite[Theorem 1.7]{padicspectra2}, there is  
an equivalence of spectra 
\begin{equation}\label{colim}\zig
(K(KU_p) \wedge v_2^{-1}V(1))^{ hG} 
\simeq \colim_{j \geq 0} \mspace{1mu}(K(KU_p) \wedge \Sigma^{-jd}V(1))^{hG}.
\end{equation}
This result, coupled with the fact that $H^\ast_c(G, -)$ commutes with colimits of discrete $G$-modules indexed by directed posets, implies that for every $t \in \zee$, the three nontrivial terms in (\ref{ses1}) satisfy the following: 
\begin{align*}
E_2^{1,t+1} & \cong \colim_{j \geq 0} 
H^1_c(G,\pi_{t+1}(K(KU_p) \wedge \Sigma^{-jd}V(1))),\\
\pi_{t}\bigl((K(KU_p) \wedge v_2^{-1}V(1))^{ hG}\bigr) 
& \cong \colim_{j \geq 0} \pi_t\bigl((K(KU_p) \wedge \Sigma^{-jd}V(1))^{hG}\bigr),\\
E_2^{0,t} & \cong \colim_{j \geq 0} 
\bigl(\pi_t(K(KU_p) \wedge \Sigma^{-jd}V(1))\bigr)^{\mspace{-3mu}G}.
\end{align*} Also, 
for each $j \geq 0$, by \cite[Remark 1.20, Theorem 7.6, (8.3)]{padicspectra2}, there is a strongly convergent homotopy spectral sequence $\{{^j\mspace{-3mu}E_r^{\ast,\ast}}\}$ having the form
\[{^j\mspace{-3mu}E_2^{s,t}} = H^s_c(G, \pi_t(K(KU_p) \wedge \Sigma^{-jd}V(1)))
\Longrightarrow 
\pi_{t-s}\bigl((K(KU_p) \wedge \Sigma^{-jd}V(1))^{ hG}\bigr),\]
with ${^j\mspace{-3mu}E_2^{s,t}} = 0$, for all $s \geq 2$, $t \in \mathbb{Z}$, so that there is a 
short exact sequence 
\begin{equation}\label{ses2}\zig
0 \to {^j\mspace{-3mu}E_2^{1,t+1}} \to \pi_{t}\bigl((K(KU_p) \wedge \Sigma^{-jd}V(1))^{ hG}\bigr) \to {^j\mspace{-3mu}E_2^{0,t}} \to 0,
\end{equation}
where $t \in \zee$. 

The above facts allow us to conclude that 
spectral sequence $\{E_r^{\ast,\ast}\}$ is the colimit over $\{j \geq 0\}$ 
of the spectral sequences $\{{^j\mspace{-3mu}E_r^{\ast,\ast}}\}$, and 
hence, the 
short exact 
sequence in (\ref{ses1}) is the colimit over $\{j \geq 0\}$ of the 
short exact sequences in (\ref{ses2}). More explicitly, 
there is a commutative diagram
\[\xymatrix@C-=0.35cm{
0 \ar[r] & E_2^{1,\ast+1} \ar[r] & \pi_{\ast}\bigl((K(KU_p) \wedge v_2^{-1}V(1))^{ hG}\bigr) \ar[r] & E_2^{0,\ast} \ar[r] & 0\\ 
0 \ar[r] & \displaystyle{\colim_{j \geq 0} {^j\mspace{-3mu}E_2^{1,\ast+1}}} \ar[r] \ar[u]^-\cong & \displaystyle{\colim_{j \geq 0} \pi_{\ast}\bigl((K(KU_p) \wedge \Sigma^{-jd}V(1))^{ hG}\bigr)} \ar[r]  \ar[u]^-\cong & \displaystyle{\colim_{j \geq 0} {^j\mspace{-3mu}E_2^{0,\ast}}} \ar[r]  \ar[u]^-\cong & 0
}\] in which the rows are exact and the columns are isomorphisms. 

\section{Step ii: a recollection of various constructions with spectra}\label{step2}

To go further, we need to better understand 
spectral sequence $\{{^j\mspace{-3mu}E_r^{\ast,\ast}}\}$, for each $j \geq 0$, and 
to do this, we need to recall 
several constructions. In this section, $H$ is an arbitrary profinite group. 

Given a spectrum $Z$, let $\mathrm{Sets}(H,Z)$ be the $H$-spectrum whose $k$th pointed simplicial set 
$\mathrm{Sets}(H,Z)_k$ has $l$-simplices $\mathrm{Sets}(H,Z)_{k,l}$ 
equal to the $H$-set $\mathrm{Sets}(H,Z_{k,l})$ of all functions 
$H \to Z_{k,l}$, for each $k, l \geq 0$, 
where the $H$-action on $\mathrm{Sets}(H,Z_{k,l})$ is defined by 
\begin{equation}\label{action}\zig
(h \cdot f)(h') = f(h'h), \ \ \ f \in \mathrm{Sets}(H,Z_{k,l}), \ h, h' \in H.
\end{equation} As 
explained in \cite[Section 2]{padicspectra2}, given any $H$-spectrum $X$, there is a 
cosimplicial $H$-spectrum $\mathrm{Sets}(H^{\bullet + 1}, X)$, where for 
each $n \geq 0$, the spectrum of $n$-cosimplices of $\mathrm{Sets}(H^{\bullet + 1}, X)$ is 
obtained by applying $\mathrm{Sets}(H,-)$ iteratively $n+1$ times to $X$. 

\begin{Def}\label{xdisO}
Let $X$ be an $H$-spectrum and let $\mathcal{O} = \{N_\lambda\}_{\lambda \in \Lambda}$ be an inverse 
system of open normal subgroups of $H$ ordered by inclusion, 
over a directed poset $\Lambda$. Following \cite[Definition 4.4]{padicspectra2}, 
\begin{align*} 
X^\mathrm{dis}_\mathcal{O} := \colim_{\lambda \in \Lambda} 
\holim_\Delta \mathrm{Sets}(H^{\bullet + 1}, X_f)^{N_\lambda},\end{align*}
where the colimit is formed 
in spectra (this definition is slightly more general than that of ``$\,X^\mathrm{dis}_\mathcal{N}\,$" in [ibid.]: $\mathcal{N}$ satisfies several hypotheses that we do not 
require from $\mathcal{O}$). Each spectrum $\holim_\Delta \mathrm{Sets}(H^{\bullet + 1}, X_f)^{N_\lambda}$ is an $H/N_\lambda$-spectrum, and hence, a discrete $H$-spectrum, via the canonical 
projection $H \to H/N_\lambda$, so that 
$X^\mathrm{dis}_\mathcal{O}$ is a discrete $H$-spectrum. Also, $X^\mathrm{dis}_\mathcal{O}$ is a fibrant spectrum (this follows from \cite[steps taken between 
(4.12) and (4.13)]{padicspectra2} and the fact that a homotopy limit of fibrant spectra is again fibrant).
\end{Def}

Let $\mathcal{O}$ be as in Definition \ref{xdisO}. 
By \cite[Lemma 4.7, proof of Theorem 4.9]{padicspectra2}, 
for any $H$-spectrum $X$, there is a zigzag 
\[X \underset{i_X}{\xrightarrow{\,\simeq\,}} \holim_\Delta \mathrm{Sets}(H^{\bullet+1}, X_f) \xleftarrow{\phi_X} X^\mathrm{dis}_\mathcal{O}\] of $H$-equivariant maps, where $i_X$ is a weak equivalence of spectra and $\phi_X$ 
is induced by the inclusions $\mathrm{Sets}(H^{\bullet+1}, X_f)^{N_\lambda} \to 
\mathrm{Sets}(H^{\bullet+1}, X_f)$.

Now suppose that $X$ is a discrete $H$-spectrum. As in 
\cite[Sections 2.4, 3.2]{joint}, there is a cosimplicial spectrum 
$\Gamma_H^\bullet X$, where 
for each $n \geq 0$, the spectrum of $n$-cosimplices of 
$\Gamma_H^\bullet X$ satisfies the isomorphism
\[(\Gamma_H^\bullet X)^n 
\cong \colim_{U \vartriangleleft_o H^n} \prod_{H^n/U} 
\mspace{1mu} X,\] where $H^n$ is the $n$-fold cartesian product of copies of 
$H$ ($H^0$ is the trivial group $\{e\}$) and the 
colimit is over all the open normal subgroups of $H^n$. By \cite[Theorem 3.2.1]{joint} and 
\cite[page 330, Remark 7.5]{cts}, if 
$H = G$, a closed subgroup of $\G$, then 
\begin{equation}\label{Godement}\zig
X^{hG} \simeq \holim_\Delta \Gamma^\bullet_G \,X_\mathtt{fib},\end{equation}
where $X_\mathtt{fib}$ is any discrete $G$-spectrum that is fibrant as a spectrum and 
is equipped with a $G$-equivariant map 
$X \xrightarrow{\,\simeq\,} X_\mathtt{fib}$ that is a weak equivalence of spectra. 

\section{Step iii: the role of $V(1)$ in the 
spectral sequences $\{{^j\mspace{-3mu}E_r^{\ast,\ast}}\}$}\label{step3}
  
Now we focus on understanding the part played by $V(1)$ in 
spectral sequence $\{{^j\mspace{-3mu}E_r^{\ast,\ast}}\}$, where $j \geq 0$ (and $G$ is any closed subgroup of $\G$). Let 
\[\mathcal{O} = \{p^m\mathbb{Z}_p\}_{m \geq 0},\] where each $p^m\mathbb{Z}_p$ is the open normal subgroup of $\G$ that corresponds to 
\[(p^m\mathbb{Z}_p) \times \{e\} \vartriangleleft_o \padics \times C_{p-1}.\] 
In the introduction, we noted that $K(KU_p) \wedge v_2^{-1}V(1)$ is realized by 
the discrete $\G$-spectrum $C^\mathrm{dis}_p$, which we can now define:
\[C^\mathrm{dis}_p := \colim_{j \geq 0} \bigl(((K(KU_p) \wedge \Sigma^{-jd}V(1))_f)^\mathrm{dis}_\mathcal{O}\bigr).\] By \cite[Remark 1.20, (8.1)]{padicspectra2}, spectral sequence 
$\{E_r^{\ast,\ast}\}$ is the homotopy spectral sequence for 
$\holim_\Delta \Gamma_G^\bullet C^\mathrm{dis}_p$. In Section \ref{stepi}, 
we noted that there is the isomorphism
\[\{E_r^{\ast,\ast}\} \cong \colim_{j \geq 0} \{{^j\mspace{-3mu}E_r^{\ast,\ast}}\}\] of 
spectral sequences; for each $j$, $\{{^j\mspace{-3mu}E_r^{\ast,\ast}}\}$ is the homotopy spectral sequence for 
\[\holim_\Delta \Gamma_G^\bullet \bigl(((K(KU_p) \wedge \Sigma^{-jd}V(1))_f)^\mathrm{dis}_\mathcal{O}\bigr).\] 

Fix any $j \geq 0$. To increase readability and when the additional intuition carried by the original notation is not needed, we will sometimes use the abbreviation 
\[\mathbb{K}_j : = K(KU_p) \wedge \Sigma^{-jd}V(1).\] Since the fibrant replacement morphism $\mathbb{K}_j \to (\mathbb{K}_j)_f$ is a weak equivalence of spectra that is $\G$-equivariant, the induced map $(\mathbb{K}_j)^\mathrm{dis}_\mathcal{O} \to ((\mathbb{K}_j)_f)^\mathrm{dis}_\mathcal{O}$ is a weak equivalence that is $\G$-equivariant, by \cite[Remark 1.20, paragraph after (8.4)]{padicspectra2}. 
If $X$ is a discrete $G$-spectrum, then for each $n \geq 0$, the spectrum of 
$n$-cosimplices of $\Gamma^\bullet_G X$ is obtained by applying iteratively $n$ times to $X$ a functor that preserves weak equivalences of spectra, by 
\cite[Lemma 2.4.1]{joint}. Thus, the induced morphism  
\[\Gamma_G^\bullet (\mathbb{K}_j)^\mathrm{dis}_\mathcal{O}\xrightarrow{\,\simeq\,} \Gamma_G^\bullet \bigl(((\mathbb{K}_j)_f)^\mathrm{dis}_\mathcal{O}\bigr)\] is an objectwise weak equivalence of cosimplicial spectra, so spectral sequence $\{{^j\mspace{-3mu}E_r^{\ast,\ast}}\}$ is isomorphic 
to the homotopy spectral sequence for 
\[\holim_\Delta\Gamma_G^\bullet (\mathbb{K}_j)^\mathrm{dis}_\mathcal{O}.\] Hence, we shift our focus to this latter spectral sequence.

For each $n \geq 0$, 
the spectrum of $n$-cosimplices of $\Gamma_G^\bullet (\mathbb{K}_j)^\mathrm{dis}_\mathcal{O}$ satisfies  
\[
\bigl(\Gamma_G^\bullet (\mathbb{K}_j)^\mathrm{dis}_\mathcal{O}\bigr)^{\mspace{-3mu}n} \cong \colim_{U \vartriangleleft_o G^n} \prod_{G^n/U}  
\colim_{m \geq 0} \holim_\Delta \mathrm{Sets}((\G)^{\bullet + 1}, 
(\mathbb{K}_j)_f)^{p^m\mathbb{Z}_p}.
\] Now choose any $m \geq 0$. 
Again at the level of $n$-cosimplices, we have
\begin{align*}
\bigl(\mathrm{Sets}((\G)^{\bullet + 1}, 
(\mathbb{K}_j)_f)^{p^m\mathbb{Z}_p}\bigr)^{\mspace{-3mu}n} 
& \cong \prod_{\scriptstyle{\G/(p^m\mathbb{Z}_p)}} \prod_{(\G)^n} \,(K(KU_p) \wedge \Sigma^{-jd}V(1))_f \\ & 
\simeq \bigl(\mathrm{Sets}((\G)^{\bullet + 1}, 
(K(KU_p))_f)^{p^m\mathbb{Z}_p}\bigr)^{\mspace{-3mu}n} \wedge \Sigma^{-jd}V(1),
\end{align*}
where the isomorphism is as in \cite[proof of Lemma 2.1]{padicspectra2} and the second step applies the fact that smashing with a finite spectrum commutes with any product. If $Z^\bullet \: \Delta \to Sp^\Sigma$ is a cosimplicial spectrum and $Z'$ is any spectrum, then there is the functor \[(-) \wedge Z' \: Sp^\Sigma \to Sp^\Sigma, \ \ \ Y \mapsto Y \wedge Z',\] and we let $Z^\bullet \wedge Z'$ denote the cosimplicial spectrum $((-) \wedge Z') \circ Z^\bullet$. Then we have 
\begin{align*}
\holim_\Delta \mathrm{Sets}((\G&)^{\bullet + 1}, 
(\mathbb{K}_j)_f)^{p^m\mathbb{Z}_p}\\ &\simeq 
\holim_\Delta (\mathrm{Sets}((\G)^{\bullet + 1}, 
(K(KU_p))_f)^{p^m\mathbb{Z}_p} \wedge \Sigma^{-jd}V(1))_f \\
& \simeq \bigl(\holim_\Delta \mathrm{Sets}((\G)^{\bullet + 1}, 
(K(KU_p))_f)^{p^m\mathbb{Z}_p}\bigr) \wedge \Sigma^{-jd}V(1),\end{align*} 
where the last step is because $\Sigma^{-jd}V(1)$ is a finite spectrum.

Our last conclusion implies that for each $n \geq 0$, we have 
\begin{align*}
\bigl(\Gamma_G^\bullet &(\mathbb{K}_j)^\mathrm{dis}_\mathcal{O}\bigr)^{\mspace{-3mu}n}\\
& \simeq \colim_{U \vartriangleleft_o G^n} \prod_{G^n/U}  
\colim_{m \geq 0} \bigl(\bigl(\holim_\Delta \mathrm{Sets}((\G)^{\bullet + 1}, 
(K(KU_p))_f)^{p^m\mathbb{Z}_p}\bigr) \wedge \Sigma^{-jd}V(1)\bigr)_{\mspace{-4mu}f} \\
& \simeq  
\bigl(\mspace{1.3mu}\colim_{U \vartriangleleft_o G^n} \prod_{G^n/U}  
\colim_{m \geq 0} \holim_\Delta \mathrm{Sets}((\G)^{\bullet + 1}, 
(K(KU_p))_f)^{p^m\mathbb{Z}_p}\bigr) \wedge \Sigma^{-jd}V(1) \\
& \cong \bigl(\Gamma_G^\bullet (K(KU_p))^\mathrm{dis}_\mathcal{O}\bigr)^{\mspace{-3mu}n} 
\wedge \Sigma^{-jd}V(1),
\end{align*} where the second step uses that the smash product commutes with 
colimits and finite products (which are weakly equivalent to finite coproducts). This shows that there is a zigzag of objectwise 
weak equivalences between the following two cosimplicial spectra:
\begin{equation}\label{reedy}\zig
\Gamma_G^\bullet \bigl((K(KU_p) \wedge \Sigma^{-jd}V(1))^\mathrm{dis}_\mathcal{O}\bigr) \simeq 
\bigl(\bigl(\Gamma_G^\bullet (K(KU_p))^\mathrm{dis}_\mathcal{O}\bigr) 
\wedge \Sigma^{-jd}V(1)\bigr)_{\mspace{-3mu}f}\,.\end{equation} 
If $Z^\bullet$ is a 
cosimplicial spectrum that is objectwise fibrant, we let $\mathfrak{hss}(Z^\bullet)$ denote the associated homotopy spectral sequence. We have shown that there 
are isomorphisms 
\begin{align*}
\{{^j\mspace{-3mu}E_r^{\ast,\ast}}\} & \cong 
\mathfrak{hss}\bigl(\Gamma_G^\bullet \bigl((K(KU_p)\wedge \Sigma^{-jd}V(1))^\mathrm{dis}_\mathcal{O}\bigr)\bigr)\\ & \cong \mathfrak{hss}\bigl(\bigl(\bigl(\Gamma_G^\bullet (K(KU_p))^\mathrm{dis}_\mathcal{O}\bigr) 
\wedge \Sigma^{-jd}V(1)\bigr)_{\mspace{-3mu}f}\bigr)\end{align*} of 
spectral sequences; the first isomorphism was obtained earlier in this section 
and the second one is by (\ref{reedy}), which also yields the following 
result. 

\begin{Thm}\label{hfp}
Let $p \geq 5$. If $G$ is a closed subgroup of $\G$ and $j \geq 0$, then  
\[(K(KU_p) \wedge \Sigma^{-jd}V(1))^{hG} 
\simeq ((K(KU_p))^\mathrm{dis}_\mathcal{O})^{hG} 
\wedge \Sigma^{-jd}V(1).\]
\end{Thm}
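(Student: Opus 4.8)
The plan is to deduce Theorem \ref{hfp} directly from the cosimplicial-spectrum identity in (\ref{reedy}) together with the Godement-type presentation of homotopy fixed points in (\ref{Godement}). First I would record that $(K(KU_p))^\mathrm{dis}_\mathcal{O}$ is a discrete $\G$-spectrum (Definition \ref{xdisO}) and, by restriction, a discrete $G$-spectrum, and moreover that it is fibrant as a spectrum, again by Definition \ref{xdisO}; hence by (\ref{Godement}) we may compute its $G$-homotopy fixed points as $\holim_\Delta \Gamma_G^\bullet (K(KU_p))^\mathrm{dis}_\mathcal{O}$, with no further fibrant replacement needed. Since $\Sigma^{-jd}V(1)$ is a finite spectrum, smashing with it commutes with the homotopy limit over $\Delta$, so
\[
\bigl(((K(KU_p))^\mathrm{dis}_\mathcal{O})^{hG}\bigr) \wedge \Sigma^{-jd}V(1)
\simeq \bigl(\holim_\Delta \Gamma_G^\bullet (K(KU_p))^\mathrm{dis}_\mathcal{O}\bigr) \wedge \Sigma^{-jd}V(1)
\simeq \holim_\Delta \bigl(\bigl(\Gamma_G^\bullet (K(KU_p))^\mathrm{dis}_\mathcal{O}\bigr) \wedge \Sigma^{-jd}V(1)\bigr).
\]

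Next I would bring in the other side. The spectrum $(K(KU_p) \wedge \Sigma^{-jd}V(1))^\mathrm{dis}_\mathcal{O}$ is a fibrant discrete $\G$-spectrum, hence a fibrant discrete $G$-spectrum, so (\ref{Godement}) gives
\[
\bigl((K(KU_p) \wedge \Sigma^{-jd}V(1))^\mathrm{dis}_\mathcal{O}\bigr)^{hG} \simeq \holim_\Delta \Gamma_G^\bullet \bigl((K(KU_p) \wedge \Sigma^{-jd}V(1))^\mathrm{dis}_\mathcal{O}\bigr);
\]
and I would recall the already-established fact (from Section \ref{step3}, via \cite[Remark 1.20, Theorem 7.6, (8.3)]{padicspectra2}) that the left-hand side here is canonically equivalent to the genuine object $(K(KU_p) \wedge \Sigma^{-jd}V(1))^{hG}$ appearing in the statement, so it suffices to match the two homotopy limits. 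Now (\ref{reedy}) provides a zigzag of objectwise weak equivalences of cosimplicial spectra
\[
\Gamma_G^\bullet \bigl((K(KU_p) \wedge \Sigma^{-jd}V(1))^\mathrm{dis}_\mathcal{O}\bigr) \simeq \bigl(\bigl(\Gamma_G^\bullet (K(KU_p))^\mathrm{dis}_\mathcal{O}\bigr) \wedge \Sigma^{-jd}V(1)\bigr)_f,
\]
and since $\holim_\Delta$ is a homotopy-invariant construction on objectwise-fibrant cosimplicial spectra, applying it to this zigzag — after replacing the first cosimplicial spectrum by an objectwise-fibrant model if necessary — yields the desired equivalence, the $f$-fibrant-replacement on the right disappearing under $\holim_\Delta$ up to equivalence. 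Chaining the three equivalences proves the theorem.

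The step I expect to be the main obstacle is the bookkeeping around fibrancy and the $(-)_f$ in (\ref{reedy}): one must check that $\Gamma_G^\bullet\bigl((K(KU_p)\wedge\Sigma^{-jd}V(1))^\mathrm{dis}_\mathcal{O}\bigr)$ is objectwise fibrant (so that its homotopy spectral sequence and homotopy limit are the ``correct'' ones and $\holim_\Delta$ sees the objectwise equivalence of (\ref{reedy})), and that the objectwise fibrant replacement on the right does not alter $\holim_\Delta$ up to weak equivalence. Both are true — the terms of $\Gamma_G^\bullet(-)$ of a fibrant discrete $G$-spectrum are filtered colimits of finite products of a fibrant spectrum, hence fibrant, and $\holim_\Delta$ preserves objectwise weak equivalences between objectwise-fibrant cosimplicial spectra — but stating them cleanly is where the care is needed. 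Everything else is formal: finiteness of $\Sigma^{-jd}V(1)$ to commute the smash past $\holim_\Delta$, and the identifications (\ref{Godement}) and the Section \ref{step3} recollection to identify each $hG$ with the relevant $\holim_\Delta$.
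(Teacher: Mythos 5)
Your proposal is correct and takes essentially the same route as the paper's proof: both sides are identified with homotopy limits over $\Delta$ via (\ref{Godement}) (together with the definitional identification of $(K(KU_p)\wedge\Sigma^{-jd}V(1))^{hG}$ from Section \ref{step3}), the two cosimplicial spectra are linked by the zigzag (\ref{reedy}), and finiteness of $\Sigma^{-jd}V(1)$ is used to commute the smash product past $\holim_\Delta$. The paper merely arranges these same ingredients as a single chain of equivalences rather than meeting in the middle, so there is nothing substantive to add.
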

\begin{proof} 
We have
\begin{align*}
(K(KU_p) \wedge \Sigma^{-jd}V(1))^{hG} & := 
((K(KU_p) \wedge \Sigma^{-jd}V(1))^\mathrm{dis}_\mathcal{O})^{hG}\\
& \simeq
\holim_\Delta \Gamma_G^\bullet \bigl((K(KU_p) \wedge \Sigma^{-jd}V(1))^\mathrm{dis}_\mathcal{O}\bigr)\\
& \simeq \holim_\Delta \bigl(\bigl(\Gamma_G^\bullet (K(KU_p))^\mathrm{dis}_\mathcal{O}\bigr) 
\wedge \Sigma^{-jd}V(1)\bigr)_{\mspace{-3mu}f}\\
& \simeq \bigl(\holim_\Delta \Gamma_G^\bullet (K(KU_p))^\mathrm{dis}_\mathcal{O}\bigr) 
\wedge \Sigma^{-jd}V(1),\end{align*} where each step is justified 
by \cite[end of Section 1.2]{padicspectra2}, (\ref{Godement}), 
(\ref{reedy}), and the fact that $\Sigma^{-jd}V(1)$ is 
a finite spectrum, respectively, and the last expression above is equivalent to 
the right-hand side 
in the desired result (again, by (\ref{Godement})).
\end{proof}

\section {Step iv: each spectral sequence is one of $P(v_2)$-modules}\label{step4}
In this section, $j \geq 0$ and, as usual, $G$ is any closed subgroup of $\G$. 

Since $p \geq 5$, $V(1)$ is a homotopy commutative and homotopy associative ring spectrum. Then by Theorem \ref{hfp},
\[\pi_\ast\bigl((K(KU_p) \wedge \Sigma^{-jd}V(1))^{hG}\bigr) 
\cong \pi_\ast(((K(KU_p))^\mathrm{dis}_\mathcal{O})^{hG} 
\wedge \Sigma^{-jd}V(1))\] is a right $\pi_\ast(V(1))$-module, and 
hence, it is a $P(v_2)$-module. This observation suggests that 
spectral sequence \[\mathfrak{HS}_j := \mathfrak{hss}\bigl(\bigl(\bigl(\Gamma_G^\bullet (K(KU_p))^\mathrm{dis}_\mathcal{O}\bigr) 
\wedge \Sigma^{-jd}V(1)\bigr)_{\mspace{-3mu}f}\bigr)\] is one of 
$P(v_2)$-modules, and now we show that this is the case. 

If $Z^\bullet$ is a cosimplicial spectrum, let $\prod^\ast \negthinspace Z^\bullet$ be its 
cosimplicial replacement. Also, 
let \[C^\bullet := \Gamma_G^\bullet (K(KU_p))^\mathrm{dis}_\mathcal{O},\] 
so that 
\[\holim_\Delta \bigl(\bigl(\Gamma_G^\bullet (K(KU_p))^\mathrm{dis}_\mathcal{O}\bigr) 
\wedge \Sigma^{-jd}V(1)\bigr)_{\mspace{-3mu}f} = \mathrm{Tot}\bigl(\textstyle{\prod^\ast} 
(C^\bullet \wedge \Sigma^{-jd}V(1))_f\bigr).\] For each $l \geq 0$, let 
\begin{equation}\label{fiber}\zig
F_l \to \mathrm{Tot}_l\bigl(\textstyle{\prod}^\ast (C^\bullet \wedge \Sigma^{-jd}V(1))_f\bigr) \to 
 \mathrm{Tot}_{l-1}\bigl(\prod^\ast (C^\bullet \wedge \Sigma^{-jd}V(1))_f\bigr)
 \end{equation}
 be a homotopy fiber sequence (when $l = 0$, the last term above is $\ast$, the trivial 
 spectrum) and to conserve space, let 
\[\mathbb{T}_l(-):= \mathrm{Tot}_l(\textstyle{\prod}^\ast(-)) \ \ \ \text{and} \ \ \ C^\bullet_j := (C^\bullet \wedge \Sigma^{-jd}V(1))_f.\] Then 
$\mathfrak{HS}_j$ is the spectral sequence obtained from the 
 exact couple formed from the long exact sequences 
\[{\cdots \to \pi_t(F_l) \to \pi_t(\mathbb{T}_l(C^\bullet_j)) \to 
 \pi_t(\mathbb{T}_{l-1}(C^\bullet_j)) \to \pi_{t-1}(F_l) \to \cdots}\] 
associated to the above homotopy fiber sequences.  

As done earlier, we now exploit the fact that smashing with a finite spectrum 
commutes with products and homotopy limits. Notice that for each $n \geq 0$, 
\begin{align*}
\bigl(\textstyle{\prod}^\ast (C^\bullet \wedge \Sigma^{-jd}V(1))_f\bigr)^{\mspace{-3mu}n} 
& = \textstyle{\prod}_{\scriptscriptstyle{\{[j_0] \to \cdots \to [j_n]\}}} (C^{j_n} \wedge \Sigma^{-jd}V(1))_f \\
& \simeq \bigl(\textstyle{\prod}_{\scriptscriptstyle{\{[j_0] \to \cdots \to [j_n]\}}} C^{j_n}\bigr) \wedge \Sigma^{-jd}V(1) 
\\
& = (\textstyle{\prod}^* C^\bullet)^n \wedge \Sigma^{-jd}V(1),\end{align*} where 
the middle two products are indexed over all length $n$ compositions in the 
category $\Delta$, so that 
\[\textstyle{\prod}^\ast (C^\bullet \wedge \Sigma^{-jd}V(1))_f 
\simeq \bigl((\textstyle{\prod}^* C^\bullet) \wedge \Sigma^{-jd}V(1)\bigr)_{\mspace{-3mu}f},\] which depicts 
a zigzag of objectwise weak equivalences 
between cosimplicial spectra. Then for each $l \geq 0$, with $\Delta^{(l)}$ 
equal to the full subcategory of $\Delta$ consisting of objects of cardinality 
less than $l+2$, and -- given a cosimplicial spectrum $Z^\bullet$ -- using 
$\holim_{\Delta^{(l)}} Z^\bullet$ to denote 
$\holim_{\Delta^{(l)}} \bigl(\Delta^{(l)} \hookrightarrow \Delta \xrightarrow{Z^\bullet} 
Sp^\Sigma\bigr),$ we have
\begin{align*}
\mathrm{Tot}_l\bigl(\textstyle{\prod}^\ast (C^\bullet &\wedge \Sigma^{-jd}V(1))_f\bigr) 
\\ &\simeq \holim_{\Delta^{(l)}} \textstyle{\prod}^\ast (C^\bullet \wedge \Sigma^{-jd}V(1))_f \simeq \displaystyle{\holim_{\Delta^{(l)}}} \bigl((\textstyle{\prod}^\ast C^\bullet) \wedge \Sigma^{-jd}V(1)\bigr)_{\mspace{-3mu}f}\\ & \simeq \bigl(\holim_{\Delta^{(l)}} \textstyle{\prod}^\ast C^\bullet\bigr) \wedge \Sigma^{-jd}V(1) \simeq \mathrm{Tot}_l(\textstyle{\prod}^\ast C^\bullet) \wedge \Sigma^{-jd}V(1),
\end{align*} where the first and last steps are by \cite[Proposition 3.10]{cubical}. Thus, in the 
stable homotopy category, for $l \geq 0$, we can regard the homotopy fiber sequence in 
(\ref{fiber}) as having 
the form
\begin{equation}\label{fiber2}\zig
F_l \to \mathrm{Tot}_l(\textstyle{\prod}^\ast C^\bullet) \wedge \Sigma^{-jd}V(1) \to 
\mathrm{Tot}_{l-1}(\textstyle{\prod}^\ast C^\bullet) \wedge \Sigma^{-jd}V(1).
\end{equation}
 
Since the stable model structure on $Sp^\Sigma$ is proper \cite[Theorem 5.5.2]{HSS}, by \cite[Remark 19.1.6, Propositions 13.4.4 and 19.5.3]{hirschhorn}, we 
can regard a homotopy fiber as a homotopy limit. For each $l \geq 0$, let 
\[\widehat{F}_l 
\xrightarrow{\gamma_l} \mathrm{Tot}_l(\textstyle{\prod}^\ast C^\bullet) 
\xrightarrow{\alpha_l} \mathrm{Tot}_{l-1}(\textstyle{\prod}^\ast C^\bullet) 
\xrightarrow{\beta_l} \Sigma\widehat{F}_l \] be a 
homotopy fiber sequence (our names for the maps follow \cite[(5.29)]{Thomason}): by an application of $(-) \wedge \Sigma^{-jd}V(1)$, 
we obtain the homotopy fiber sequence 
\[\widehat{F}_l \wedge \Sigma^{-jd}V(1)
\xrightarrow{\gamma_l \wedge 1} \mathrm{Tot}_l(\textstyle{\prod}^\ast C^\bullet)  \wedge \Sigma^{-jd}V(1) \xrightarrow{\alpha_l \wedge 1} \mathrm{Tot}_{l-1}(\textstyle{\prod}^\ast C^\bullet) \wedge \Sigma^{-jd}V(1).\] By comparing 
this fiber sequence with (\ref{fiber2}), another 
application of commuting a homotopy limit with smashing with a finite spectrum 
yields
\[F_l \simeq \widehat{F}_l \wedge \Sigma^{-jd}V(1), \ \ \ l \geq 0.\] 
It follows that $\mathfrak{HS}_j$ is the spectral sequence obtained from the exact couple formed from the long exact sequences 
\begin{align*}
{\cdots} \displaystyle{}\to \pi_\ast(&\widehat{F}_l \wedge \Sigma^{-jd}V(1)) \xrightarrow{(\gamma_l \wedge 1)_\ast} \pi_\ast(\mathbb{T}_l(C^\bullet) 
\wedge \Sigma^{-jd}V(1)) \,{\scriptstyle{-\cdots}}\\ & \xrightarrow{(\alpha_l \wedge 1)_\ast}
 \pi_\ast(\mathbb{T}_{l-1}(C^\bullet) 
\wedge \Sigma^{-jd}V(1)) \xrightarrow{(\beta_l \wedge 1)_\ast} \pi_{\ast-1}(\widehat{F}_l \wedge \Sigma^{-jd}V(1)) \to \,{\cdots}
\end{align*} (the top row ends with a morphism that is continued in the bottom row), 
where $l~\geq~0$. 
As recalled earlier, $V(1)$ is a homotopy commutative and homotopy associative ring spectrum, so that this long exact sequence is in the category 
of $P(v_2)$-modules. Thus, the associated exact couple and, 
consequently, spectral sequence $\mathfrak{HS}_j$ live in the category of 
$P(v_2)$-modules. 

\section{Step v: the $P(v_2)$-module spectral sequences give a direct sum}\label{step5}

As usual, $G$ is any closed subgroup of $\G$, and 
$\mathrm{Mod}_{P(v_2)}$ is the category of $P(v_2)$-modules. 
We recall from Section \ref{stepi} that there is the isomorphism
\[\pi_{\ast}\bigl((K(KU_p) \wedge v_2^{-1}V(1))^{ hG}\bigr) \cong 
\colim_{j \geq 0} \pi_{\ast}\bigl((K(KU_p) \wedge \Sigma^{-jd}V(1))^{ hG}\bigr),\] 
where the right-hand side is the middle term in the colimit 
\[\colim_{j \geq 0} \bigl(0 \to {^j\mspace{-3mu}E_2^{1,\ast+1}} \to 
\pi_{\ast}\bigl((K(KU_p) \wedge \Sigma^{-jd}V(1))^{ hG}\bigr) \to {^j\mspace{-3mu}E_2^{0,\ast}} \to 0\bigr)\] of short 
exact sequences. For each $j \geq 0$, 
$\mathfrak{HS}_j$ is a spectral sequence in $\mathrm{Mod}_{P(v_2)}$ 
and since it is isomorphic to spectral sequence $\{{^j{\mspace{-3mu}E_r^{\ast,\ast}}}\}$, the associated short exact sequence (displayed above, inside the parentheses) 
is in $\mathrm{Mod}_{P(v_2)}$. 
It will be helpful to write out this short exact sequence explicitly: omitting the trivial terms on the ends and letting $\mathsf{K}$ denote $K(KU_p)$, this sequence 
of $P(v_2)$-modules has the form
\[H^1_c(G, \pi_{\ast+1}(\mathsf{K} \wedge \Sigma^{-jd}V(1))) \to 
\pi_\ast((\mathsf{K}^\mathrm{dis}_\mathcal{O})^{hG} \wedge \Sigma^{-jd}V(1)) 
\to (\pi_\ast(\mathsf{K} \wedge \Sigma^{-jd}V(1)))^G,\] where the middle term 
resulted from applying Theorem \ref{hfp}. 

If $Z$ is any spectrum, then the diagram $\{\pi_\ast(Z \wedge \Sigma^{-jd}V(1))\}_{j \geq 0}$ 
is in $\mathrm{Mod}_{P(v_2)}$, so that 
the isomorphism \[\pi_\ast(Z \wedge v_2^{-1}V(1)) \cong 
\colim_{j \geq 0} \pi_\ast(Z \wedge \Sigma^{-jd}V(1))\] is in the category of $P(v_2^{\pm 1})$-modules (for example, see \cite[Corollary 1.2]{publishedjems}). 
The direct system of spectra $\{\Sigma^{-jd}V(1)\}_{j \geq 0}$ induces 
a direct system 
\[\bigl\{\bigl(\bigl(\Gamma^\bullet_G(K(KU_p))^\mathrm{dis}_\mathcal{O}\bigr) 
\wedge \Sigma^{-jd}V(1)\bigr)_{\mspace{-3mu}f}\bigr\}_{\mspace{-3mu}j \geq 0}\] of 
cosimplicial spectra, and hence, 
a direct system $\{\mathfrak{HS}_j\}_{j \geq 0}$ of homotopy spectral sequences. 
Thus, there is the direct system 
\[\bigl\{\pi_\ast\bigl(\bigl(\bigl(\Gamma^\ast_G(K(KU_p))^\mathrm{dis}_\mathcal{O}\bigr) 
\wedge \Sigma^{-jd}V(1)\bigr)_{\mspace{-3mu}f}\bigr)\bigr\}_{\mspace{-3mu}j \geq 0}\] of associated cochain complexes in $\mathrm{Mod}_{P(v_2)}$, the cohomology of which induces 
the direct system 
\[\bigl\{H^s_c(G, \pi_\ast(K(KU_p) \wedge \Sigma^{-jd}V(1)))\bigr\}_{j \geq 0}\] 
in $\mathrm{Mod}_{P(v_2)}$, for $s = 0, 1$. Therefore, the diagram
\[\bigl\{0 \to {^j\mspace{-3mu}E_2^{1,\ast+1}} \to 
\pi_{\ast}(((K(KU_p))^\mathrm{dis}_\mathcal{O})^{hG} \wedge \Sigma^{-jd}V(1)) \to {^j\mspace{-3mu}E_2^{0,\ast}} \to 0\bigr\}_{\mspace{-3mu}j \geq 0}\] of short exact sequences 
is in $\mathrm{Mod}_{P(v_2)}$, so that the exact sequence
\[0 \to \colim_{j \geq 0} {^j\mspace{-3mu}E_2^{1,\ast+1}} \to 
\colim_{j \geq 0} \pi_{\ast}(((K(KU_p))^\mathrm{dis}_\mathcal{O})^{hG} \wedge \Sigma^{-jd}V(1)) \to\colim_{j \geq 0} {^j\mspace{-3mu}E_2^{0,\ast}} \to 0\] is in the category of $P(v_2^{\pm1})$-modules, where the isomorphisms
\[\colim_{j \geq 0} {^j\mspace{-3mu}E_2^{s,\ast}} \cong H^s\negthinspace 
\Bigl[\colim_{j \geq 0} 
\pi_\ast\bigl(\bigl(\bigl(\Gamma^\ast_G(K(KU_p))^\mathrm{dis}_\mathcal{O}\bigr) 
\wedge \Sigma^{-jd}V(1)\bigr)_{\mspace{-3mu}f}\bigr)\Bigr], \ \ \ s = 0, 1,\] show that the 
two outer nontrivial terms in the exact sequence are indeed modules over $P(v_2^{\pm1})$. In particular, in every degree $t$, the sequence is one of 
$\mathbb{F}_p$-modules and is split exact, giving 
\begin{align*}
\pi_{t}&\bigl((K(KU_p) \wedge v_2^{-1}V(1))^{hG}\bigr)\\ & \cong 
\bigl(\colim_{j \geq 0} H^1_c(G, \pi_{t+1}(K(KU_p) \wedge \Sigma^{-jd}V(1)))\bigr) 
\oplus \bigl(\pi_{t}(K(KU_p) \wedge v_2^{-1}V(1))\bigr)^{\mspace{-3mu}G},
\end{align*} 
an isomorphism of $\mathbb{F}_p$-modules. 

\section{Step vi: simplifying $H^1_c(G, \pi_\ast(K(KU_p) \wedge V(1))[v_2^{-1}])$}\label{step6}
Now we work on reducing the first summand in the direct sum obtained at the end of the previous section to a more familiar object. Fix $j \geq 0$ and $t \in \zee$, and recall that 
\[\pi_t(\mathbb{K}_j) = \pi_t(K(KU_p) \wedge \Sigma^{-jd}V(1))\] is a finite abelian group (this fact is explained in \cite[Section 1.2]{padicspectra2}; the author did not play a role in the hard work behind the explanation, which was done by others, as 
noted by the references in [ibid.]) and, as a unitary $\mathbb{F}_p$-module, it is a $p$-torsion group (that is, $pm = 0$, for every element $m$). Notice that 
\begin{align*}
H^1_c(G, \pi_{t}(K(KU_p) \wedge \Sigma^{-jd}V(1))) 
& \cong H^1_c(\G, \mathrm{Coind}^{\G}_G\mspace{-4mu}(\pi_{t}(\mathbb{K}_j)))\\
& \cong \colim_{N \vartriangleleft_o \G} H^1_c(\G, C^N_{(t,j)}),\end{align*} where the first isomorphism is by Shapiro's Lemma, the 
second one is by \cite[Proposition 6.10.4, (a)]{Ribes} -- with 
\[C^N_{(t,j)} : = 
\mathrm{Coind}^{\G/N}_{GN/N}((\pi_{t}(\mathbb{K}_j))^{N \cap G}),\] and 
each $C^N_{(t,j)}$ is a $\G/N$-module (by definition), which makes $C^N_{(t,j)}$ 
a discrete $\G$-module 
via the projection $\G \to \G/N$. 

Let $N$ be fixed. As a set, $C^N_{(t,j)}$ 
is finite and, for every element $f$ in this abelian group, $pf = 0$. This last fact -- together with $p-1$ and $p$ being relatively prime -- implies that the cohomology 
$H^\ast(C_{p-1}, C^N_{(t,j)})$ for the $C_{p-1}$-module $C^N_{(t,j)}$ (by restriction of the $\G$-action) vanishes in positive degrees, so that in the 
Lyndon-Hochschild-Serre spectral sequence 
\[E_2^{p,q} = H^p_c\bigl(\padics, H^q(C_{p-1}, C^N_{(t,j)})\bigr) \Longrightarrow 
H^{p+q}_c(\G, C^N_{(t,j)}),\] we have
\[E_2^{p,q} = \begin{cases}
H^p_c\mspace{-2mu}\bigl(\padics, \bigl(C^N_{(t,j)}\bigr)^{\mspace{-3mu}C_{p-1}}\bigr), & q = 0;\\
0, & q > 0.\end{cases}\] 
This gives 
\[H^1_c(\G, C^N_{(t,j)}) \cong H^1_c\mspace{-2mu}\bigl(\padics, \bigl(C^N_{(t,j)}\bigr)^{\mspace{-3mu}C_{p-1}}\bigr) \cong H^1\mspace{-2mu}\bigl(\zee, \bigl(C^N_{(t,j)}\bigr)^{\mspace{-3mu}C_{p-1}}\bigr) \cong \bigl(\bigl(C^N_{(t,j)}\bigr)^{\mspace{-3mu}C_{p-1}}\bigr)_{_{\mspace{-3mu}\scriptstyle{\zee}}},\]
where the third expression above is 
a non-continuous cohomology group and the second isomorphism is because 
$\bigl(C^N_{(t,j)}\bigr)^{\mspace{-3mu}C_{p-1}}$ is finite and $p$-torsion 
(for example, see \cite[Example 4.6, Lemma 4.7]{LinnellSchick}).

Now we put the pieces together as $j$ varies. Given a group $K$, let 
$\zee[K]$-$\mathrm{Mod}$ be the category of $K$-modules, and let $\mathrm{Ab}$ denote the category of abelian groups. Also, given mathematical expressions $A$ and $B$, notation of the form \[A \overset{K/e/L}{\cong} B \ \ \ \text{or} \ \ \  A \overset{e/K/L}{\cong} B\] means that (a) in $\mathrm{Ab}$, $A \cong B$; (b) in expression 
$A$, any colimits are in $\zee[K]$-$\mathrm{Mod}$ or $\mathrm{Ab}$ (signified by ``$e$" in ``$e/K/L$"), 
respectively, but these colimits can be formed in 
$\mathrm{Ab}$ or $\zee[K]$-$\mathrm{Mod}$, respectively, since the forgetful functor 
$\zee[K]\text{-}\mathrm{Mod} \to \mathrm{Ab}$ is a left adjoint; (c) part (b) explains 
the commuting of any colimits with the evident functor and this commuting 
underlies the 
isomorphism $A \cong B$; and (d) $L$ denotes a group, and in $B$, any colimits are 
in $\zee[L]$-$\mathrm{Mod}$, by which we mean $\mathrm{Ab}$, when 
$L$ is ``$e$." (To avoid any confusion, we note that if $K = \zee$, then 
$\zee[K]$-$\mathrm{Mod}$ means ${\zee[\zee]}$-$\mathrm{Mod}$.) We have
\begin{align*}
\colim_{j \geq 0} H^1_c(&G, \pi_{t}(K(KU_p) \wedge \Sigma^{-jd}V(1))) 
\cong \colim_{N \vartriangleleft_o \G} \colim_{j \geq 0} \bigl(\bigl(C^N_{(t,j)}\bigr)^{\mspace{-3mu}C_{p-1}}\bigr)_{_{\mspace{-3mu}\scriptstyle{\zee}}}\\
& \overset{e/e/\zee}{\cong} \Bigl(\,\colim_{N \vartriangleleft_o \G} \colim_{j \geq 0} \bigl(C^N_{(t,j)}\bigr)^{\mspace{-3mu}C_{p-1}}\Bigr)_{_{\mspace{-6mu}\scriptstyle{\zee}}} 
\overset{\zee/e/e}{\cong} \Bigl(\Bigl(\,\colim_{N \vartriangleleft_o \G} \colim_{j \geq 0} C^N_{(t,j)}\Bigr)^{\mspace{-5mu}C_{p-1}}\Bigr)_{_{\mspace{-6mu}\scriptstyle{\zee}}}\, .\end{align*} 

Again, let $N \vartriangleleft_o \G$ be fixed and, as is standard, given $A \in 
\zee[GN/N]\text{-}\mathrm{Mod}$, let 
\[\mathrm{Ind}^{\G/N}_{GN/N}(A) = \mathbb{Z}[\G/N] \otimes_{\mathbb{Z}[GN/N]} A,\] 
and set 
\[\mathbb{P}_j := (\pi_{t}(\mathbb{K}_j))^{N \cap G}.\]
Then there are isomorphisms  
\[
C^N_{(t,j)} \cong \mathrm{Hom}_{\zee[GN/N]\text{-}\mathrm{Mod}}(\mathbb{Z}[\G/N], 
\mathbb{P}_j)
\cong \mathrm{Ind}^{\G/N}_{GN/N}(\mathbb{P}_j)\] of $\zee[\G/N]$-modules, since $\G/N$ is finite (for example, see 
\cite[proof of Proposition 6.10.4]{Ribes}) and because 
$(\G/N)/(GN/N) \cong \G/GN$ is finite \cite[Proposition 5.9]{KennethBrown}, respectively. 
Hence, there are the following isomorphisms of $\zee[\G/N$]-modules (in the 
first use below of the ``$\,\scriptstyle{\overset{e/K/L}{\cong}}\,$" notation, part (c) of its meaning 
does not apply):
\begin{align*}
\colim_{j \geq 0}& \,C^N_{(t,j)} \overset{e/(\G/N)/(\G/N)}{\cong} \mspace{-2mu}
\colim_{j \geq 0} \mathrm{Ind}^{\G/N}_{GN/N}(\mathbb{P}_j) \mspace{-2mu}
\overset{e/(\G/N)/(GN/N)}{\cong} \mspace{-2mu} \mathrm{Ind}^{\G/N}_{GN/N}(\colim_{j \geq 0} \mathbb{P}_j)\\
& 
\cong \mathrm{Coind}^{\G/N}_{GN/N}(\colim_{j \geq 0} \mathbb{P}_j)
\cong \mathrm{Coind}^{\G/N}_{GN/N}((\pi_{t}(K(KU_p) \wedge v_2^{-1}V(1)))^{N \cap G}).\end{align*} These four isomorphisms are of $\zee[\G]$-modules (via the 
projection $\G \to \G/N$) and, by \cite[Proposition 6.10.4, (a)]{Ribes}, we conclude 
that 
\[
\colim_{j \geq 0} H^1_c(G,\pi_{t}(K(KU_p) \wedge \Sigma^{-jd}V(1))) 
\negthinspace \cong \negthinspace 
\bigl(\mspace{-2mu}\bigl(\mathrm{Coind}^{\G}_{G}\mspace{-4mu}(\pi_{t}(K(KU_p) \wedge v_2^{-1}V(1)))\bigr)^{\mspace{-2mu}C_{p-1}}\bigr)_{_{\mspace{-3mu}\scriptstyle{\zee}}},
\] completing the proof of Theorem \ref{directsum}.

In case it is easier to compute $\colim_{j \geq 0} H^1_c(G,\pi_{t}(K(KU_p) \wedge \Sigma^{-jd}V(1)))$ by not restricting the 
$\padics$-action to the $\zee$-action, as done on the right-hand side in the last isomorphism above, we take another look at each 
$H^1_c\mspace{-2mu}\bigl(\padics, \bigl(C^N_{(t,j)}\bigr)^{\mspace{-3mu}C_{p-1}}\bigr)$ to obtain the following result, which is the content of Remark \ref{ctscoinvariants}. Here (as in the remark), $\padics$ is regarded as having the 
trivial $\padics$-group action. 

\begin{Lem}\label{not_restricted} 
When $p \geq 5$, $G$ is any closed subgroup of $\G$, and $t \in \zee$, there is an isomorphism 
\begin{align*}
H^1_c(G, & \,\pi_t(K(KU_p) \wedge v_2^{-1}V(1)))\\ 
& \cong 
\padics {\otimes}_{\padics[[\padics]]} 
\bigl(\bigl(\mathrm{Coind}^{\G}_{G}\mspace{-3mu}(\pi_{t}(K(KU_p) \wedge v_2^{-1}V(1)))\bigr)^{\mspace{-3mu}C_{p-1}}\bigr).
\end{align*}
\end{Lem}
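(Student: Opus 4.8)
The plan is to read this off the computation already completed in Section~\ref{step6}. In the course of proving Theorem~\ref{directsum} it is shown that, for each $t \in \zee$,
\[ H^1_c(G, \pi_t(K(KU_p) \wedge v_2^{-1}V(1))) \;\cong\; \bigl(\bigl(\mathrm{Coind}^{\G}_{G}(\pi_t(K(KU_p)\wedge v_2^{-1}V(1)))\bigr)^{C_{p-1}}\bigr)_{\zee}, \]
where the outer subscript denotes the $\zee$-coinvariants of $N := \bigl(\mathrm{Coind}^{\G}_{G}(\pi_t(K(KU_p)\wedge v_2^{-1}V(1)))\bigr)^{C_{p-1}}$, a discrete $\padics$-module, regarded as a $\zee[\zee]$-module via $\zee \hookrightarrow \padics$. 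Thus the entire content of the lemma is the purely algebraic identification $N_\zee \cong \padics \otimes_{\padics[[\padics]]} N$; nothing about spectra or homotopy fixed points remains to be done.

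To prove that identification I would proceed as follows. First, recall that any discrete $\padics$-module $N$ is canonically a module over the completed group algebra $\padics[[\padics]]$: writing $N = \colim_m N^{p^m\padics}$, the ring surjections $\padics[[\padics]] \twoheadrightarrow \padics[\padics/p^m\padics]$ equip each $N^{p^m\padics}$ with a $\padics[[\padics]]$-module structure, compatibly. Second, since the profinite group $(\padics,+)$ is topologically generated by $1$, there is an isomorphism $\padics[[\padics]] \cong \padics[[T]]$ under which $T$ corresponds to $[1]-1$ (here $[1]$ denotes the group element $1$), and $\padics$ equipped with the trivial $\padics$-action is the cyclic module $\padics[[\padics]]/([1]-1)\padics[[\padics]]$. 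It follows that $\padics \otimes_{\padics[[\padics]]} N \cong N/([1]-1)N$; and since the inclusion $\zee \hookrightarrow \padics$ carries the generator $1$ of $\zee$ to $1 \in \padics$, the quotient $N/([1]-1)N$ is exactly $N_\zee$. Chaining this with the displayed isomorphism yields the lemma.

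I do not anticipate a serious obstacle: the argument is a reinterpretation of facts already in hand, together with the standard presentation of the trivial module over the Iwasawa algebra. The one point worth verifying carefully is that the tensor product in the statement is the \emph{ordinary} one over $\padics[[\padics]]$, not a completed one---this is legitimate precisely because $\padics$ is a \emph{cyclic} $\padics[[\padics]]$-module, so that the ideal $([1]-1)\padics[[\padics]]$ is principal even as an abstract ideal and $(-)\otimes_{\padics[[\padics]]}\padics$ is simply $(-)/([1]-1)(-)$; this is the distinction flagged in Remark~\ref{ctscoinvariants}.
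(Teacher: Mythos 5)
Your proposal is correct, but it takes a genuinely different route from the paper's own proof. The paper does not quote the $\zee$-coinvariants identification from Theorem \ref{directsum}; instead it works again at the level of the finite pieces $\bigl(C^N_{(t,j)}\bigr)^{C_{p-1}}$, computes $H^1_c(\padics,-)$ from the projective resolution $0 \to \padics[[\padics]] \xrightarrow{\tau} \padics[[\padics]] \to \padics \to 0$, identifies the resulting cokernel with the continuous homology group $H^c_0(\padics,-) = \padics \widehat{\otimes}_{\padics[[\padics]]}(-)$ by invoking that $\padics$ is an orientable profinite Poincar\'e duality group at $p$ of dimension one, replaces the completed tensor product by the ordinary one using finiteness of the coefficients, and only then assembles the statement by the colimit manipulations of Section \ref{step6}. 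You instead start from the coinvariants description already established there and reduce the lemma to the purely algebraic identity $N_{\zee} \cong \padics \otimes_{\padics[[\padics]]} N$, proved from the presentation $\padics \cong \padics[[\padics]]/([1]-1)$ (equivalently $\padics[[T]]/(T)$), so that the ordinary tensor product is literally the quotient $N/([1]-1)N = N_{\zee}$. What your route buys is the avoidance of both the Poincar\'e duality step and the completed-versus-ordinary tensor comparison, and it applies directly to the (possibly infinite) colimit module rather than requiring finiteness at each stage; what the paper's route buys is a self-contained computation of $H^1_c(\padics,-)$ that does not presuppose the $\zee$-coinvariants form. One small correction: your opening claim that \emph{any} discrete $\padics$-module is canonically a $\padics[[\padics]]$-module is an overstatement --- the fixed-point modules $N^{p^m\padics}$ are a priori only $\zee[\padics/p^m\padics]$-modules, and one needs the underlying group to be $p$-torsion (or to carry a compatible $\padics$-linear structure) to promote them to $\padics[\padics/p^m\padics]$-modules; this is exactly the point the paper makes at the end of its proof, and since the module at hand is an $\mathbb{F}_p$-module, your argument goes through unchanged.
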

\begin{proof}
Notice that every coefficient group $\bigl(C^N_{(t,j)}\bigr)^{\mspace{-3mu}C_{p-1}}$ is a finite discrete $p$-torsion $\padics[[\padics]]$-module. It is standard that there is a projective resolution
\[0 \to \padics[[\padics]] 
\xrightarrow{\,\tau\,} \padics[[\padics]] \to \padics \to 0\] (for example, see \cite[proof of Proposition 6]{minicourse} for any omitted details) that can be used to compute the cohomology group (for more 
information about this, see \cite[Section 3.2]{Symonds}). 
Thus, the cohomology group is the cohomology of the complex obtained by applying 
the functor $\mathrm{Hom}_{\padics[[\padics]]}^c(-,\bigl(C^N_{(t,j)}\bigr)^{\mspace{-3mu}C_{p-1}})$ of continuous module homomorphisms 
to this resolution: we obtain that
\begin{align*}
H^1_c\mspace{-2mu}\bigl(\padics, \bigl(C^N_{(t,j)}\bigr)^{\mspace{-3mu}C_{p-1}}\bigr) & \cong \faktor{\scriptstyle{(C^N_{(t,j)})^{\mspace{-1mu}C_{p-1}}\mspace{-8mu}}}{\scriptstyle{\mspace{-3mu}}{(\scriptstyle{\mathrm{im}(\tau^\ast \mspace{-1.5mu}\: 
(C^N_{(t,j)})^{\mspace{-1mu}C_{p-1}} \to (C^N_{(t,j)})^{\mspace{-1mu}C_{p-1}}))}}}\\ 
& \cong H^c_0\mspace{-2mu}\bigl(\padics, \bigl(C^N_{(t,j)}\bigr)^{\mspace{-3mu}C_{p-1}}
\bigr) = \padics \widehat{\otimes}_{\padics[[\padics]]} 
\bigl(C^N_{(t,j)}\bigr)^{\mspace{-3mu}C_{p-1}}\\ 
& \cong \padics {\otimes}_{\padics[[\padics]]} 
\bigl(C^N_{(t,j)}\bigr)^{\mspace{-3mu}C_{p-1}},\end{align*} where the right-hand side 
of the second isomorphism is a continuous homology group (see 
\cite[Section 3.3]{Symonds}) and the last step is because 
$\bigl(C^N_{(t,j)}\bigr)^{\mspace{-3mu}C_{p-1}}$ is finite (and thus, a 
finitely generated object in the category of profinite $\padics[[\padics]]$-modules; 
see \cite[Proposition 5.5.3]{Ribes}). The isomorphism in the second step is not 
quite immediate, and it can be justified in a sleek way: since $\mathbb{Z}$ is 
an orientable discrete Poincar\'e duality group of dimension one and 
pro-$p$ good (in the sense of \cite[Section 3.1]{weigelabelian}; by \cite[Example 4.6, Lemma 4.7]{LinnellSchick}), the pro-$p$ completion $\padics$ is an orientable (profinite) Poincar\'e duality group at $p$ of dimension one, by \cite[Proposition 3.2]{weigelabelian} (here, for ``orientable (profinite) Poincar\'e duality group at $p$," we use the 
definitions in \cite[Section 4.4, page 394]{Symonds} and by \cite[Remark 2.2]{frattini}, 
these are equivalent to those used in \cite{weigelabelian}), and the desired 
isomorphism follows. 

Then the result follows by the manipulations that preceded this lemma. To 
understand the abstract $\padics[[\padics]]$-module structure of the $C_{p-1}$-fixed points of the coinduced module in the statement of the lemma (and of the various pieces involved in the manipulations), it is helpful to note that if $H$ is an arbitrary profinite group, then a $p$-torsion discrete $H$-module is canonically a discrete, and 
hence abstract, $\padics[[H]]$-module. 
\end{proof}

\section{A further reduction in the case when $G = \G$}\label{lastmaybe}
Let $V(0)$ be the mod $p$ Moore spectrum $M(p)$, and more generally, for each integer $i \geq 1$, let $M(p^i)$ be the mod $p^i$ Moore spectrum. By restriction of the $\G$-action, $C_{p-1}$ acts on $K(KU_p)$, so that there is the homotopy fixed 
point spectrum
\[K(KU_p)^{hC_{p-1}} = (K(KU_p))^{hC_{p-1}},\] and by 
\cite[page 1267]{rognesicm} (see \cite[pages 11-12]{jems} for a proof), 
the canonical map 
\[\holim_{i \geq 1} (K(L_p) \wedge M(p^i)) \xrightarrow{\,\simeq\,} 
\holim_{i \geq 1} (K(KU_p)^{hC_{p-1}} \wedge M(p^i))\] is a weak equivalence. It follows that the morphism
\[L_{V(0)}(K(L_p)) \xrightarrow{\,\simeq\,} L_{V(0)}(K(KU_p)^{hC_{p-1}})\] (between Bousfield localizations with respect to $V(0)$) is a weak equivalence, so that the natural map 
$K(L_p) \to K(KU_p)^{hC_{p-1}}$ is a $V(0)$-equivalence. The familiar cofiber sequence 
\[\Sigma^{2p-2}V(0) \xrightarrow{v_1} V(0) \xrightarrow{i_1} V(1)\] induces 
the commutative diagram
\[\xymatrix@C=.55cm{
K(L_p) \wedge \Sigma^{2p-2}V(0) \ar[r]  \ar[d] & K(L_p) \wedge V(0) \ar[r] \ar[d] & K(L_p) \wedge V(1) \ar[d]\\
K(KU_p)^{hC_{p-1}} \wedge \Sigma^{2p-2}V(0) \ar[r] & K(KU_p)^{hC_{p-1}} \wedge V(0) \ar[r]  & K(KU_p)^{hC_{p-1}} \wedge V(1)
}\] in which the rows are cofiber sequences. Since the leftmost and middle vertical maps are weak equivalences, the rightmost vertical map is a weak equivalence. 
Thus, for each $j \geq 0$, the map 
\[K(L_p) \wedge \Sigma^{-jd}V(1) \xrightarrow{\,\simeq\,} K(KU_p)^{hC_{p-1}} \wedge \Sigma^{-jd}V(1)\] 
is a weak equivalence. We apply this conclusion in the following way. 

There are the homotopy fixed point spectral sequences 
\[{^{\sharp \mspace{-2mu}}E_2^{s,t}} = H^s(C_{p-1}, \pi_t(K(KU_p) \wedge v_2^{-1}V(1))) \Longrightarrow 
\pi_{t-s}((K(KU_p) \wedge v_2^{-1}V(1))^{hC_{p-1}})\] and
\[{_{\scriptscriptstyle{j}\mspace{-7mu}}{^{\sharp \mspace{-3mu}}E_2^{s,t}}} \mspace{-2mu}=\mspace{-2mu} H^s\mspace{-2mu}(C_{p-1}, \pi_t(K(KU_p) \wedge \Sigma^{-jd}V(1))) \mspace{-2mu}\Rightarrow \mspace{-2mu}
\pi_{t-s}((K(KU_p) \wedge \Sigma^{-jd}V(1))^{hC_{p-1}}),\] for each $j \geq 0$. Since 
each $\pi_t(K(KU_p) \wedge \Sigma^{-jd}V(1))$ is $p$-torsion, both 
\[{^{\sharp \mspace{-2mu}}E_2^{s,t}} \cong 
\colim_{j \geq 0}{_{\scriptscriptstyle{j}\mspace{-7mu}}{^{\sharp \mspace{-3mu}}E_2^{s,t}}}\] and each $\displaystyle{{_{\scriptscriptstyle{j'}\mspace{-8mu}}{^{^{\scriptstyle{\sharp}} \mspace{-5mu}}E_2^{s,t}}}}$ vanish
for $s > 0$, $t \in \zee$, and $j' \geq 0$. As a consequence, 
\begin{align*}\pi_\ast((K(KU_p) \wedge v_2^{-1}V(1))^{hC_{p-1}}) & \cong 
(\pi_\ast(K(KU_p) \wedge v_2^{-1}V(1)))^{C_{p-1}},\\
(K(KU_p) \wedge v_2^{-1}V(1))^{hC_{p-1}} & \simeq \colim_{j \geq 0} 
(K(KU_p) \wedge \Sigma^{-jd}V(1))^{hC_{p-1}}, \ \text{and}\\ 
\pi_\ast((K(KU_p) \wedge \Sigma^{-jd}V(1))^{hC_{p-1}}) 
&\cong (\pi_\ast(K(KU_p) \wedge \Sigma^{-jd}V(1)))^{C_{p-1}}, \ j \geq 0
\end{align*} (the above equivalence of spectra (the middle line) is a special 
case of (\ref{colim}) from \cite[Theorem 1.7]{padicspectra2}, but here, [ibid.] is not needed and the conclusion follows from 
the vanishing properties stated above and \cite[Proposition 3.3]{mitchell}). Therefore (for the following deductions, we do not need the second isomorphism in 
$\mathrm{Ab}$ displayed above (which is indexed by $\{j \mid j \geq 0\}$); we state it here because of its intrinsic interest), we have the 
isomorphisms
\begin{align*}
(\pi_\ast(K(KU_p) \wedge v_2^{-1}V(1)))^{C_{p-1}} & \cong 
\colim_{j \geq 0} \pi_\ast((K(KU_p) \wedge \Sigma^{-jd}V(1))^{hC_{p-1}})\\
& \cong \colim_{j \geq 0} \pi_\ast(K(KU_p)^{hC_{p-1}}\wedge \Sigma^{-jd}V(1))\\
& \cong \pi_\ast(K(L_p) \wedge v_2^{-1}V(1)) \cong K(L_p)_\ast(V(1))[v_2^{-1}].\end{align*} Each of the spectra 
$K(L_p)$ and $K(KU_p)^{hC_{p-1}}$ have a natural action by $\padics$ and the 
map $K(L_p) \to K(KU_p)^{hC_{p-1}}$ is $\padics$-equivariant; thus, each of 
the above four isomorphisms is $\padics$-equivariant. 


\bibliographystyle{plain}

\end{document}